\title[ Vaisman metrics on  solvmanifolds]
{Vaisman metrics on solvmanifolds and  Oeljeklaus-Toma manifolds}
\author{Hisashi Kasuya}
\newcommand{\C}{\mathbb{C}}
\newcommand{\R}{\mathbb{R}}
\newcommand{\Q}{\mathbb{Q}}
\newcommand{\Z}{\mathbb{Z}}
\newcommand{\g}{\frak{g}}
\newtheorem{theorem}{Theorem}[section]
\newtheorem{lemma}[theorem]{Lemma}    
\newtheorem{corollary}[theorem]{Corollary}
\newtheorem{proposition}[theorem]{Proposition}
\newtheorem{definition}{Definition}
\newtheorem{remark}{Remark}
\newtheorem{problem}{Problem}
\newtheorem{example}{Example}
\begin{document} 

\maketitle
\begin{abstract}
We prove the non-existence of Vaisman metrics on some  solvmanifolds with  left-invariant complex structures.
By  this theorem, we show that  Oeljeklaus-Toma manifolds does not admit  Vaisman metrics.
\end{abstract}
\section{Introduction}
A Hermitian metric $g$ on a complex manifold is locally conformal K\"ahler (LCK) if there exists a closed $1$-form $\theta$ (called the Lee from)  such that $d\omega=\theta\wedge \omega$ where $\omega$ is the fundamental form of $g$.
A LCK metric $g$ is  Vaisman if  the Lee form $\theta$ is parallel.
It is known that Vaisman manifolds have some special properties not shared by LCK manifolds.
For example \cite{V},\cite{Kas} showed that the first Betti number $b_{1}$ of a Vaisman manifold is odd,
whereas an LCK manifold with even $b_{1}$ is presented in \cite{OT}.

Let $G$ be a simply connected solvable Lie group with a lattice (i.e. cocompact discrete subgroup) $\Gamma$.
We call $G/\Gamma$ a solvmanifold.
If $G$ is nilpotent, then we call $G/\Gamma$ a nilmanifold.
We are interested in studying LCK and Vaisman structures on solvmanifolds.
Suppose $G$ is nilpotent and $G$ admits a left-invariant complex structure $J$.
It is known that the nilmanifold $(G/\Gamma,J)$ admits a LCK metric if and only if $G=\R\times H(n)$ where $H(n)$ is the $(2n+1)$-dimensional Heisenberg Lie group (see \cite{Saw}).
On the other hand, not much is known about LCK and Vaisman structures on general solvmanifolds.
The purpose of this paper is to prove non-existense of Vaisman metrics on some  solvmanifolds with left-invariant complex structures.
We prove:
\begin{theorem}\label{MTTTTT}
Let $G=\R^{m}\ltimes_{\phi} \R^{n}$  such that $\phi$ is a semi-simple action.
Suppose  $\dim [G,G]>\frac{\dim G}{2}$, $G$ has a lattice $\Gamma$ and a left-invariant complex structure $J$ and $b_{1}(G/\Gamma)=b_{1}(\g)$.
Then $(G/\Gamma,J)$ admits no Vaisman metric. 
\end{theorem}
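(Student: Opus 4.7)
The plan is to argue by contradiction. Suppose $(G/\Gamma, J)$ admits a Vaisman metric. I would first reduce to a left-invariant Vaisman structure on $\g$, then derive a rank/dimension obstruction on the fundamental $2$-form that contradicts $\dim[\g,\g] > \dim \g/2$.

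\textbf{Reduction to the invariant setting.} The hypothesis $b_{1}(G/\Gamma) = b_{1}(\g)$ together with the semi-simplicity of $\phi$ is a Hattori/Mostow-type condition which ensures that every de Rham class in $H^{1}(G/\Gamma)$ is represented by a left-invariant closed $1$-form. Consequently, after a $D$-homothety in the LCK conformal class, the Lee form $\theta$ may be taken left-invariant, and a standard symmetrization of the Hermitian data (averaging over the closure of a suitable abelian subgroup of automorphisms of $(\g,J)$) upgrades the entire Vaisman structure $(\omega, J, \theta)$ to one that is $G$-invariant.

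\textbf{The Vaisman identity at the Lie-algebra level.} Parallelism of $\theta$ gives $L_{B}\omega = 0$ for the Lee vector $B = \theta^{\sharp}$. Combined with Cartan's formula, the LCK relation $d\omega = \theta\wedge\omega$, and the identity $\iota_{B}\omega = J\theta$, this yields
\[
d(J\theta) \;=\; -\,\omega_{0}, \qquad \omega_{0} := \omega - \theta \wedge J\theta.
\]
Hence $\omega_{0}$ is an exact, $J$-invariant $2$-form on $\g$ of rank exactly $\dim \g - 2$; its kernel is the $J$-invariant $2$-plane $\langle B, JB\rangle$.

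\textbf{Rank obstruction.} Write $\g = V \oplus W$ with $V \cong \R^{m}$ and $W \cong \R^{n}$ both abelian. Because $[V,V] = [W,W] = 0$, any exact $2$-form $d\alpha$ on $\g$ vanishes on $V\wedge V$ and on $W\wedge W$, so it lives in the mixed block $V^{*}\wedge W^{*}$ and has rank at most $2\min(m,n)$. Applied to $\omega_{0} = -d(J\theta)$:
\[
\dim \g - 2 \;=\; \mathrm{rank}(\omega_{0}) \;\le\; 2\min(m,n).
\]
The hypothesis $\dim[\g,\g] > \dim\g/2$ together with $[\g,\g] \subseteq W$ forces $n > m$, so the bound becomes $n - m \le 2$. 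On the other hand, the semi-simplicity of $\phi$ yields $\dim[\g,\g] = n - \dim W_{0}$ where $W_{0} := \ker\phi \subseteq W$, and the hypothesis rewrites as $n - m > 2\dim W_{0}$. If $\dim W_{0} \ge 1$ this already gives $n - m > 2$, a contradiction. If $\dim W_{0} = 0$, then parity of $\dim\g$ (which must be even since $J$ exists) forces the borderline value $n - m = 2$, which must then be excluded by a finer analysis that uses $J$-invariance of $\omega_{0}$, integrability of $J$, and the weight decomposition of $W_{\C}$ under $\phi$ — showing that no exact, $J$-invariant $2$-form of rank $\dim\g - 2$ is compatible with this weight structure. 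The main obstacle of the proof is precisely this borderline analysis; the $\dim W_{0} \ge 1$ regime falls out of the naive rank count, but the boundary case $n = m+2$ with $\phi$ acting without fixed vectors (the regime that contains the Oeljeklaus–Toma manifolds of type $(s,1)$) is the substantive part.
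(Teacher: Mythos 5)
Your approach is genuinely different from the paper's (a structure-theoretic rank count on an invariantized Vaisman form, versus the paper's Morse--Novikov cohomology argument), and it has two real gaps. First, the invariantization step is not justified: averaging over $G$ (the map $I$ of Section 2) preserves positivity, $J$-compatibility and the equation $d\omega=\theta_{0}\wedge\omega$, so it does produce a left-invariant \emph{LCK} metric, but parallelism of the Lee form is a condition on the Levi-Civita connection of the metric and there is no reason it survives averaging; your "standard symmetrization upgrades the entire Vaisman structure" is an assertion, not an argument. Everything in your second step --- the identity $d(J\theta)=-\omega_{0}$ and the conclusion that $\omega-\theta\wedge J\theta$ is exact of corank $2$ --- requires an \emph{invariant Vaisman} structure, which you have not produced. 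The paper never produces one either: it applies the vanishing $H^{\ast}_{\theta}(M)=0$ (Theorem \ref{OOR}, de Le\'on--L\'opez--Marrero--Padr\'on) to the original, possibly non-invariant Vaisman metric to get $[\omega]_{\theta}=0$, and separately shows the averaged left-invariant LCK form has non-zero $d_{\theta_{0}}$-class, using the isomorphism $H^{\ast}_{\theta}(G/\Gamma)\cong H^{\ast}_{\theta_{0}}(G/\Gamma)$ and the injectivity of $H^{\ast}_{\theta_{0}}(\g)\to H^{\ast}_{\theta_{0}}(G/\Gamma)$ (Lemma \ref{sekib}).

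Second, even granting an invariant Vaisman structure, your rank inequality only yields $n-m\le 2$, and you explicitly leave open the borderline case $n=m+2$ with $\phi$ acting without trivial submodule. After the reduction of Lemma 4.1 this is precisely the case containing the Oeljeklaus--Toma manifolds of type $(s,1)$ (there $m=s$, $n=s+2$), i.e.\ the paper's main application, so deferring it to an unspecified "finer analysis" leaves the theorem unproved where it matters. The case split is also avoidable: since $\frak{n}$ has no trivial submodule, $\ker d\vert_{\bigwedge^{1}\g^{\ast}}=\bigwedge^{1}{\frak a}^{\ast}$ forces $\theta\in\bigwedge^{1}{\frak a}^{\ast}$, and then every $d_{\theta}$-exact $2$-form lies in $\bigwedge^{2}{\frak a}^{\ast}\oplus({\frak a}^{\ast}\otimes{\frak n}^{\ast})$, i.e.\ has vanishing $\bigwedge^{2}{\frak n}^{\ast}$-component; on the other hand any non-degenerate $2$-form on $\g$ with $m<n$ must have non-zero $\bigwedge^{2}{\frak n}^{\ast}$-component, since otherwise $\omega^{(m+n)/2}=0$. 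This single observation replaces your rank bound, treats all values of $n-m$ uniformly, and applies to $\omega$ itself rather than to $\omega-\theta\wedge J\theta$, so it needs only the LCK property of the invariant form --- which is exactly what the averaging actually delivers.
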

Since $\R\times H(n)$ admits a left-invariant Vaisman metric, a nilmanifold with a left-invariant complex structure admits a LCK metric if and only if it admits a Vaisman metric.
But for solvmanifolds, by Theorem \ref{MTTTTT}, it is to be expected that we obtain many non-Vaisman LCK manifolds.

We call a solvmanifold $G/\Gamma$  meta-abelian  if $G=\R^{m}\ltimes_{\phi} \R^{n}$  such that $\phi$ is a semi-simple action.
On some   meta-abelian solvmanifolds, we can find various non-K\"ahler complex geometric structures.
For example,  pseudo-K\"ahler structures (see \cite{Y}) and generalized K\"ahler structures (see \cite{FT}).
In \cite{K1} the author showed that  meta-abelian  solvmanifolds are formal in the sense of Sullivan (moreover geometrically formal in the sense of  Kotschick \cite{KF}) and satisfy  hard Lefschetz property if they admit symplectic structures.
We note that a solvmanifold admits a K\"ahler metric  if and only if it is a finite quotient of a complex torus which has a structure of a complex torus bundle over a complex torus  (see \cite{Hn} and \cite{BC}).

In Theorem \ref{MTTTTT}, the assumption  $b_{1}(G/\Gamma)=b_{1}(\g)$ is important.
In this paper we study a criterion for the condition  $H^{1}(\g)\cong H^{1}(G/\Gamma)$.
We prove:
\begin{theorem}
For a weakly completely solvable Lie group $G$ with a lattice $\Gamma$, we have an isomorphism
\[H^{1}(\g)\cong H^{1}(G/\Gamma).
\]
\end{theorem}
 A weakly completely solvable Lie group (see Definition \ref{wea}) is more general than a completely solvable Lie group.
Thus this theorem is a generalization of Hattori's theorem \cite{Hatt} for the first cohomology.

Important examples which we can apply Theorem \ref{MTTTTT} to are Oeljeklaus-Toma(OT) manifolds.
In \cite{OT}, for any integers  $s>0$ and $t>0$, Oeljeklaus and Toma constructed  compact complex manifolds (OT-manifolds of type $(s,t)$) with  Betti number $b_{1}=s$  by using number theory.
Oeljeklaus and Toma showed that for any integers $s>0$ OT-manifolds of type $(s,1)$ admit LCK metrics.
For even $s>0$, we can say that these  admit no  Vaisman metric (moreover  OT-manifolds of type $(2,1)$ are counter examples to Vaisman's conjecture).
But for odd $s>0$, it was not known whether   OT-manifolds of type $(s,1)$ admit   Vaisman metrics.
In this paper we represent OT-manifolds as solvmanifolds and we prove:
\begin{corollary}
OT-manifolds do not   admit  Vaisman metrics.
\end{corollary}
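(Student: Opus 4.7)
The plan is to realize every OT-manifold as a solvmanifold of exactly the shape covered by Theorem \ref{MTTTTT}, and then simply check each hypothesis. Recall that an OT-manifold of type $(s,t)$ is constructed from a number field $K$ of degree $s+2t$ with $s$ real and $2t$ complex embeddings: one takes the action of a rank-$s$ subgroup $U\subset\mathcal{O}_{K}^{\ast,+}$ on $\mathbb{H}^{s}\times\C^{t}$ (via the real embeddings on $\mathbb{H}^{s}$ and the complex embeddings on $\C^{t}$), together with a translation action of $\mathcal{O}_{K}$ on the same space. The first step is to encode this as a simply connected solvable Lie group
\[
G=\R^{s}\ltimes_{\phi}\R^{s+2t},
\]
where $\phi\colon\R^{s}\to\Aut(\R^{s+2t})$ is built from the logarithms of the real embeddings acting on the real parts of the $\mathbb{H}$-factors together with the pair (real embedding, argument rotation coming from the complex embeddings) on $\C^{t}\cong\R^{2t}$. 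Because $\phi$ is (after choosing the correct basis) diagonal/block-diagonal with purely imaginary and real eigenvalues coming from the embeddings of units, the action $\phi$ is semi-simple. The lattice $\Gamma$ is the semidirect product of $U$ with $\mathcal{O}_{K}$, and the standard complex coordinates on $\mathbb{H}^{s}\times\C^{t}$ descend to a left-invariant complex structure $J$ on $G$.

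Next I would verify the inequality $\dim[G,G]>\tfrac{1}{2}\dim G$. Since $\phi$ is non-trivial on each of the $s+2t$ real coordinates of $\R^{s+2t}$ (this is where Dirichlet's unit theorem and the non-vanishing of the regulator come in), one has $[G,G]=\R^{s+2t}$, while $\dim G=2s+2t$. Hence $\dim[G,G]=s+2t>s+t$ precisely when $t\geq 1$, which is the only interesting case (for $t=0$ the quotient is the flat torus).

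The remaining hypothesis is the Betti number condition $b_{1}(G/\Gamma)=b_{1}(\g)$. This is where Theorem~1.2 does the work: one checks that $G$ is weakly completely solvable, because all characters appearing in the adjoint action of $\R^{s}$ are obtained from the logarithms of the (positive real and complex-norm) embeddings of the units in $U$, and these have no nontrivial imaginary character whose values at the lattice lie in $2\pi i\Z$. Granting weak complete solvability, Theorem~1.2 yields $H^{1}(G/\Gamma)\cong H^{1}(\g)$, and a direct computation on $\g$ gives $b_{1}(\g)=s$, matching the known $b_{1}(G/\Gamma)=s$ of OT-manifolds.

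With all four assumptions of Theorem~\ref{MTTTTT} verified, the theorem applies and rules out any Vaisman metric on $(G/\Gamma,J)$, which is the OT-manifold of type $(s,t)$. I expect the main obstacle to be bookkeeping rather than conceptual: one must choose a real basis of $\R^{s+2t}$ in which the unit action is simultaneously block-diagonalized, and then confirm that in this basis the commutator subalgebra really has the full dimension $s+2t$ and that the weak complete solvability condition is met. Both reduce to standard facts from algebraic number theory (logarithmic embedding, Dirichlet's unit theorem), so no serious new computation is expected.
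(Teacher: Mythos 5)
Your proposal follows essentially the same route as the paper: present the OT-manifold as $G/\Gamma$ with $G=\R^{s}\ltimes_{\phi}(\R^{s}\times\C^{t})$ and $\Gamma=U\ltimes\mathcal{O}_{K}$, compute $\dim[G,G]=s+2t>s+t=\tfrac{1}{2}\dim G$, obtain $b_{1}(G/\Gamma)=b_{1}(\g)$ from weak complete solvability, and invoke Theorem \ref{MTTTTT} (the paper packages the last two steps as Corollary \ref{WCS}). The only quibble is your description of the eigenvalues of $\phi$ as ``purely imaginary and real'': the eigencharacters on the $\C^{t}$-factor are $e^{\psi_{k}\pm\sqrt{-1}\varphi_{k}}$ with $\psi_{k}\not\equiv 0$, and this non-vanishing of the real part is precisely what makes $G$ weakly completely solvable, so the wording should be corrected even though your argument is otherwise the intended one.
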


\section{$d_{\theta}$-cohomology}
Let $M$ be a manifold and $A^{\ast}(M)$ the de Rham complex of $M$ with the exterior differential $d$.
For a closed $1$-form $\theta\in A^{\ast}(M)$  we define the new differential $d_{\theta}:A^{p}(M)\to A^{p+1}(M)$ by $d_{\theta}(\alpha)=d\alpha-\theta\wedge \alpha$.
We denote by $H^{\ast}_{\theta}(M)$ the $d_{\theta}$-cohomology and by $[\alpha]_{\theta}$ the $d_{\theta}$-cohomology class of a $d_{\theta}$-closed form $\alpha$.

Let $G$ be a simply connected solvable Lie group with a lattice $\Gamma$ and $\g$ be the Lie algebra of $G$.
Consider the exterior algebra $\bigwedge \g^{\ast} $ of the dual space of $\g$.
Let $d:\bigwedge^{1}\g\to \bigwedge^{2}\g$ be the dual map of the Lie bracket of $\g$ and    $d:\bigwedge^{p}\g\to \bigwedge^{p+1}\g$ the extension of this map.
We can identify  $(\bigwedge \g^{\ast}, d)$ with the left-invariant forms on $G$ with the exterior differential.
By the invariant condition, we also consider $(\bigwedge \g^{\ast}, d)$ as the subcomplex of $A^{\ast}(G/\Gamma)$.
Let $\theta\in\bigwedge \g^{\ast}$ be a closed left-invariant $1$-form. 
We denote by $H^{\ast}_{\theta}(\g)$ the cohomology of the cochain complex $\bigwedge \g^{\ast}$ with the differential $d_{\theta}$.
A simply connected solvable Lie group with a lattice is unimodular (see \cite[Remark 1.9]{R}).
Let $d\mu$ be a bi-invariant volume form such that $\int_{G/\Gamma}d\mu =1$.
For $\alpha\in A^{p}(G/\Gamma)$, we have a left-invariant form $\alpha_{inv}\in \bigwedge^{p} \g^{\ast}$ defined  by
\[\alpha_{inv}(X_{1},\dots ,X_{p})=\int_{G/\Gamma}\alpha(\tilde X_{1},\dots ,\tilde X_{p})d\mu
\]
for $X_{1},\dots ,X_{p}\in \g$ where $\tilde X_{1},\dots ,\tilde X_{p}$ are vector fields on $G/\Gamma$ induced by $X_{1},\dots X_{p}$.
We define the map $I:A^{\ast}(M)\to  \bigwedge \g^{\ast}$ by $\alpha \mapsto \alpha_{inv}$.
\begin{lemma}\label{sekib}
For any closed left-invariant  $1$-form $\theta$,  $I:(A^{\ast}(G/\Gamma),d_{\theta})\to (\bigwedge \g^{\ast},d_{\theta})$ is a homomorphism of cochain complexes and satisfies $I\circ i=id_{\bigwedge \g^{\ast}}$ where $i:\bigwedge \g^{\ast}\to A^{\ast}(G/\Gamma)$ is the above inclusion.
Hence the induced map $i^{\ast}:H^{\ast}_{\theta}(\g)\to H^{\ast}_{\theta}(G/\Gamma)$ is injective.
\end{lemma}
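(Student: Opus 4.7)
The plan is to reduce the claim about the twisted differential $d_\theta$ to the corresponding (classical) fact for the de Rham differential $d$, and then combine it with the left-invariance of $\theta$. First I would verify the auxiliary identity $I(\beta\wedge\alpha)=\beta\wedge I(\alpha)$ whenever $\beta\in\bigwedge\g^{\ast}$ is left-invariant: this is immediate from the definition of $I$, because $\beta$ evaluated on the left-invariant vector fields $\tilde X_i$ produces constant functions on $G/\Gamma$ which factor out of the integral.

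The main step is the averaging identity $I\circ d=d\circ I$. Applying the Chevalley--Eilenberg formula to $dI(\alpha)$, the derivative terms $\tilde X_i(I(\alpha)(\ldots))$ vanish since $I(\alpha)$ is left-invariant and hence takes constant values on the $\tilde X_j$, leaving only the bracket terms. These match the bracket terms in $I(d\alpha)$, and the remaining piece amounts to the identity $\int_{G/\Gamma}\tilde X_i(f)\,d\mu=0$ for any smooth function $f$. This follows from Stokes' theorem on the closed manifold $G/\Gamma$, using that the bi-invariant volume form is preserved by the flow of a left-invariant vector field; this is precisely the unimodularity of $G$ guaranteed by the existence of a lattice (cited from \cite{R} in the paper). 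Combined with the first step, this gives $I\circ d_\theta=d_\theta\circ I$.

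The identity $I\circ i=\mathrm{id}_{\bigwedge\g^{\ast}}$ is direct: on $\alpha\in\bigwedge^{p}\g^{\ast}$, the form $i(\alpha)$ takes the constant value $\alpha(X_1,\dots,X_p)$ on $(\tilde X_1,\dots,\tilde X_p)$, and integration against the normalized volume $\int_{G/\Gamma}d\mu=1$ returns $\alpha(X_1,\dots,X_p)$. For injectivity of $i^{\ast}$, suppose $\alpha\in\bigwedge\g^{\ast}$ is $d_\theta$-closed and satisfies $i(\alpha)=d_\theta\beta$ for some $\beta\in A^{\ast}(G/\Gamma)$. Applying $I$ and using that it is a chain map yields $\alpha=I(i(\alpha))=I(d_\theta\beta)=d_\theta I(\beta)$, so $\alpha$ is already $d_\theta$-exact in $\bigwedge\g^{\ast}$.

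The main obstacle is the averaging identity $I\circ d=d\circ I$: although the argument is classical, going back to Nomizu in the nilpotent case and generalized by Hattori, it depends crucially on unimodularity, without which $\int_{G/\Gamma}\tilde X_i(f)\,d\mu$ would pick up a nonzero divergence term. Once this is in place, the passage to $d_\theta$ via the wedge identity and the deduction of injectivity on cohomology are purely formal.
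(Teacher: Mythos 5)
Your proposal is correct and follows essentially the same route as the paper: expand the Chevalley--Eilenberg formula, kill the derivative terms using $\int_{G/\Gamma}\tilde X(f)\,d\mu=0$ (which the paper simply cites from Belgun's proof rather than rederiving via Stokes and unimodularity as you do), use left-invariance of $\theta$ to handle the twist, and deduce injectivity from the retraction $I\circ i=\mathrm{id}$. No gaps.
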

\begin{proof}
Consider 
\begin{multline*}
(d\alpha)_{inv}(X_{1},\dots ,X_{p+1})=\sum \int _{G/\Gamma} (-1)^{i+1}\tilde X_{i} (\alpha (\tilde X_{1},\dots ,{\hat {\tilde X}}_{i},\dots \tilde X_{p+1}))\\
+\sum(-1)^{i+j}\int_{G/\Gamma}\alpha([\tilde X_{i},\tilde X_{j}],\tilde X_{1},\dots ,{\hat {\tilde X}}_{i},\dots ,{\hat  {\tilde X}}_{j},\dots \tilde X_{p+1}).
\end{multline*}
In the proof of \cite[Theorem 7]{Bel}, it is proved that $\int_{G/\Gamma}A(F)d\mu=0$ 
for any function $F$ on $G/\Gamma$ and a left-invariant vector field $A$.
Thus $(d\alpha)_{inv}(X_{1},\dots, X_{p+1})=d(\alpha_{inv})(X_{1},\dots,X_{p+1})$.
Since $\theta$ is left-invariant, we have $(\theta \wedge \alpha)_{inv}(X_{1},\dots, X_{p+1})=\theta \wedge \alpha_{inv}(X_{1},\dots, X_{p+1})$.
Thus $I:(A^{\ast}(G/\Gamma),d_{\theta})\to (\bigwedge \g^{\ast},d_{\theta})$ is a homomorphism of cochain complexes.
Obviously we have $I\circ i=id_{\bigwedge \g^{\ast}}$.
\end{proof}

\section{LCK and Vaisman metrics}
Let $(M,J)$ be a complex manifold with a Hermitian metric $g$. 
We consider the fundamental form  $\omega=g(-,J-)$ of $g$.
The metric $g$ is  locally conformal K\"ahler (LCK)  if we have a closed $1$-form $\theta$ (called   the Lee form) such that $d\omega=\theta\wedge \omega$.
To study only non-K\"ahler LCK metrics, in this paper we assume  $\theta\not=0$ and $\theta$ is non-exact. 
Let $\nabla$ be the Levi-Civita connection of $g$.
A LCK metric $g$ is a Vaisman metric if  $ \nabla \theta=0$.
For a LCK manifold $(M,J,g)$ with  Lee form $\theta$,
the equation $d\omega=\theta\wedge \omega$ implies $d_{\theta}\omega =0$.
\begin{theorem}\label{OOR}{\rm (\cite{LL}) }
Let $(M,J)$ be a compact complex manifold admitting a Vaisman metric with the fundamental form $\omega$ and  Lee form $\theta$.
Then  the cohomology $H^{\ast}_{\theta}(M)$ is trivial.
In particular, we have $[\omega]_{\theta}=0$.
\end{theorem}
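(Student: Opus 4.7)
The plan is to prove the stronger statement $H^{\ast}_{\theta}(M) = 0$ via a Weitzenb\"ock-type identity for the twisted Laplacian, using crucially the Vaisman hypothesis $\nabla \theta = 0$. Since $\theta$ is parallel, $|\theta|$ is a positive constant. I would introduce the formal $L^{2}$-adjoint $d_{\theta}^{\ast} = d^{\ast} - \iota_{\theta^{\sharp}}$ of $d_{\theta}$ and the associated Laplacian $\Delta_{\theta} = d_{\theta}d_{\theta}^{\ast} + d_{\theta}^{\ast}d_{\theta}$. Because $\Delta_{\theta}$ is elliptic with the same principal symbol as the Hodge Laplacian $\Delta$, compactness of $M$ produces a Hodge decomposition for $d_{\theta}$ and an identification $H^{\ast}_{\theta}(M) \cong \ker \Delta_{\theta}$.

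The heart of the proof is the pointwise identity
\begin{equation*}
\Delta_{\theta} = \Delta + |\theta|^{2}\cdot \mathrm{id}.
\end{equation*}
Expanding $\Delta_{\theta}$ produces three correction terms. The anticommutator $\{\theta\wedge\cdot, \iota_{\theta^{\sharp}}\}$ equals $|\theta|^{2}\cdot\mathrm{id}$ from the basic contraction identity, and $\{d, \iota_{\theta^{\sharp}}\} = L_{\theta^{\sharp}}$ by Cartan's formula. The remaining term $\{d^{\ast}, \theta\wedge\cdot\}$ I would compute in an orthonormal frame via $d^{\ast} = -\sum_{i}\iota_{e_{i}}\nabla_{e_{i}}$; the Leibniz rule produces a piece containing $\nabla_{e_{i}}\theta$, which vanishes by parallelism, and what survives is $\{d^{\ast}, \theta\wedge\cdot\} = -\nabla_{\theta^{\sharp}}$. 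Since $\nabla\theta^{\sharp} = 0$ and the Levi-Civita connection is torsion-free, one also has $L_{\theta^{\sharp}} = \nabla_{\theta^{\sharp}}$ on forms, so the two middle contributions cancel and the identity drops out.

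With the Weitzenb\"ock identity in hand, vanishing is forced by positivity: for every form $\alpha$,
\begin{equation*}
\langle \Delta_{\theta}\alpha, \alpha\rangle = \langle \Delta\alpha, \alpha\rangle + |\theta|^{2}\|\alpha\|^{2} \geq |\theta|^{2}\|\alpha\|^{2},
\end{equation*}
so $\ker \Delta_{\theta} = 0$ and therefore $H^{\ast}_{\theta}(M) = 0$. The LCK identity $d\omega = \theta\wedge \omega$ is exactly $d_{\theta}\omega = 0$, so the special case $[\omega]_{\theta} = 0$ follows at once.

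The step I expect to be the main obstacle is the anticommutator computation $\{d^{\ast}, \theta\wedge\cdot\} = -\nabla_{\theta^{\sharp}}$. This is the only place where the Vaisman hypothesis (as opposed to mere closedness of $\theta$) is used: without $\nabla\theta = 0$, the expansion retains zero-th order contributions built from $\nabla\theta$ that obstruct the cancellation with $L_{\theta^{\sharp}}$ and break the clean positivity argument on which the vanishing rests.
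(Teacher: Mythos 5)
Your argument is correct and complete. Note, however, that the paper does not prove this statement at all: it is quoted verbatim from the reference \cite{LL} (de Le\'on--L\'opez--Marrero--Padr\'on) and used as a black box, so there is no internal proof to compare against. What you have written is a self-contained proof of the imported result, and it is the standard one: the anticommutator bookkeeping $\Delta_{\theta}=\Delta+\{\theta\wedge\cdot,\iota_{\theta^{\sharp}}\}-\{d,\iota_{\theta^{\sharp}}\}-\{d^{\ast},\theta\wedge\cdot\}$ is right, the three brackets evaluate to $|\theta|^{2}\,\mathrm{id}$, $L_{\theta^{\sharp}}$ and $-\nabla_{\theta^{\sharp}}$ respectively (the last two cancelling because a parallel vector field satisfies $L_{\theta^{\sharp}}=\nabla_{\theta^{\sharp}}$ on forms for the torsion-free connection), and ellipticity of $\Delta_{\theta}$ gives the Hodge isomorphism $H^{\ast}_{\theta}(M)\cong\ker\Delta_{\theta}$, which is killed by the positivity of the constant $|\theta|^{2}$. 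Two small remarks. First, make explicit that $|\theta|^{2}>0$ uses $\theta\not\equiv 0$; this is covered by the paper's standing convention in Section 3 that the Lee form is nonzero (and on a compact manifold a nonzero parallel form can never vanish anywhere, so the constant is genuinely positive). Second, your proof in fact establishes something more general than the stated theorem: it shows $H^{\ast}_{\theta}(M)=0$ for any nonzero parallel $1$-form on a compact Riemannian manifold, with the complex structure and the LCK condition playing no role except in the final specialization $d_{\theta}\omega=0\Rightarrow[\omega]_{\theta}=0$. That is a feature, not a bug, but it is worth being aware that the Vaisman hypothesis enters only through parallelism of $\theta$.
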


\begin{remark}
In addition to this theorem, if $(M,J)$ admits another LCK (not necessarily Vaisman) form $\omega_{0}$ with  Lee form $\theta_{0}$, then $\theta_{0}$ is cohomologous to $\theta$ and $[\omega_{0}]_{\theta_{0}}=0$ (see \cite{OV}).
\end{remark}
In this paper we also consider locally conformal symplectic (LCS) forms on $2n$-dimensional real manifolds.
They are non-degenerate $2$-forms $\omega$ such that there exists a closed $1$-form $\theta$ (also called the  Lee form) satisfying $d\omega=\theta\wedge \omega$.

For a Lie group $G$ we call a LCS form $\omega$ with  Lee form $\theta$ on $G$ (or $G/\Gamma$ if $G$ has a lattice $\Gamma$) a left-invariant LCS form if  $\omega\in \bigwedge \g^{\ast}$ and $\theta \in \bigwedge \g^{\ast}$.
Suppose $G$ admits a left-invariant complex structure $J$.
We call a Hermitian metric $g$ on $(G,J)$ (or $(G/\Gamma,J)$) a left-invariant LCK if $g$ is a left-invariant Hermitian metric and the fundamental form $\omega$ of $g$ is a left-invariant LCS form.

\section{Vaisman metrics on solvmanifolds}
First we prove:
\begin{lemma}
Let $G=\R^{m}\ltimes_{\phi}\R^{n}$  such that $\phi$ is a semi-simple action.
Then we can rewrite $G=\R^{m^{\prime}}\ltimes_{\psi} \R^{n^{\prime}}$ such that $\R^{n^{\prime}}$ has no trivial $\R^{m^{\prime}}$-submodule and $n^{\prime}=\dim [G,G]$.
\end{lemma}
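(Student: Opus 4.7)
The plan is to peel off the $\R^{m}$-fixed directions inside $\R^{n}$ and absorb them into the abelian factor. Since $\phi$ is semi-simple, the $\R^{m}$-representation on $\R^{n}$ is completely reducible, so I would decompose $\R^{n}=V\oplus W$ where $V=\{v\in\R^{n} : \phi(X)v=0 \text{ for all } X\in\R^{m}\}$ is the space of $\phi$-fixed vectors and $W$ is the sum of the non-trivial irreducible $\R^{m}$-subrepresentations.

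Next set $\R^{m^{\prime}}:=\R^{m}\oplus V$ and $\R^{n^{\prime}}:=W$, and let $\psi$ act on $W$ via $\phi|_{W}$ on the $\R^{m}$-summand and trivially on $V$. Because $V$ is $\phi$-fixed and already central in the abelian ideal $\R^{n}$, the subspace $\R^{m}\oplus V$ is an abelian Lie subalgebra of $\g$; a direct comparison of brackets then shows $G\cong \R^{m^{\prime}}\ltimes_{\psi}\R^{n^{\prime}}$, and by construction $W$ contains no trivial $\R^{m^{\prime}}$-submodule.

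It remains to identify $n^{\prime}$ with $\dim[G,G]$. At the Lie algebra level $[\g,\g]$ is spanned by vectors of the form $\psi(X)(w)$ with $X\in\R^{m^{\prime}}$ and $w\in W$, so $[\g,\g]\subseteq W$. For each irreducible summand $W_{i}$ of $W$ the subspace $U_{i}:=\sum_{X}\psi(X)(W_{i})$ is $\psi$-invariant, since $\R^{m^{\prime}}$ is abelian and hence $\psi(Y)\psi(X)(w)=\psi(X)\psi(Y)(w)\in \psi(X)(W_{i})$. By irreducibility $U_{i}$ equals $0$ or $W_{i}$, and since the action on $W_{i}$ is non-trivial we must have $U_{i}=W_{i}$. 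Summing gives $[\g,\g]=W$, so $\dim[G,G]=n^{\prime}$.

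The only delicate step is this last identification of $[\g,\g]$ with $W$. What makes it work is the combination of complete reducibility (to split off $V$ cleanly), irreducibility (to force each $U_{i}$ to be all or nothing), and abelianness of $\R^{m^{\prime}}$ (which is what makes $U_{i}$ a $\psi$-submodule in the first place); in particular no case analysis on the real-versus-complex types of the irreducible summands is needed.
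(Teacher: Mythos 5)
Your proposal is correct and follows essentially the same route as the paper: decompose $\R^{n}$ into the maximal trivial submodule plus a complement using complete reducibility, absorb the trivial part into the abelian factor, and identify $[\g,\g]$ with the complement via the absence of trivial subrepresentations. The paper phrases the last step at the group level, as $[G,G]=\{\phi(a)B-B\}$ together with $\C$-diagonalizability, while you argue summand-by-summand at the Lie algebra level, but the substance is the same.
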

\begin{proof}
Since $\phi$ is a semi-simple action,  we consider a decomposition $\R^{n}=V_{1}\oplus V_{2}$ such that $V_{1}$ is a maximal trivial $\R^{m}$-submodule and $V_{2}$ is its complement.
Then we have $G=V_{1}\times (\R^{m}\ltimes V_{2})$.
We notice that $\phi(\R^{m})$ is $\C$-diagonalizable.
Since we have $[G,G]=\{\phi(a)B-B\vert a\in \R^{m}, B\in V_{2} \}$ and $V_{2}$ has no trivial submodule, we have $\dim [G,G]= \dim V_{2}$.
\end{proof}
To prove Theorem \ref{MTTTTT} we prove:
\begin{theorem}\label{LCS}
Let $G=\R^{m}\ltimes_{\phi} \R^{n}$ such that $\phi$ is a semi-simple action.
Suppose  $\dim [G,G]>\frac{\dim G}{2}$ and $G$ has a lattice $\Gamma$.
Then for any left-invariant LCS form $\omega$ with  Lee form $\theta$, the $d_{\theta}$-cohomology class of $\omega $ is not $0$ in $H_{\theta}^{2}(G/\Gamma)$.
\end{theorem}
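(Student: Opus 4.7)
The plan is to work entirely at the Lie-algebra level. By Lemma~\ref{sekib}, the map $i^{\ast}\colon H^{2}_{\theta}(\g) \to H^{2}_{\theta}(G/\Gamma)$ is injective, so it suffices to show that $[\omega]_{\theta} \neq 0$ in $H^{2}_{\theta}(\g)$. Applying the preceding lemma and relabeling, I may assume that $\R^{n}$ contains no trivial $\R^{m}$-submodule and $n = \dim[G,G]$; the hypothesis $\dim[G,G] > \dim G/2$ then becomes $n > m$.

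Next I would exploit the tri-grading
\[
\bigwedge^{2}\g^{\ast} = \bigwedge^{2}(\R^{m})^{\ast} \,\oplus\, (\R^{m})^{\ast}\wedge(\R^{n})^{\ast} \,\oplus\, \bigwedge^{2}(\R^{n})^{\ast},
\]
whose summands I call the $(2,0)$, $(1,1)$, and $(0,2)$ components. Because $[\g,\g] \subseteq \R^{n}$ and both $\R^{m}$ and $\R^{n}$ are abelian, the Chevalley--Eilenberg differential $d\colon \g^{\ast} \to \bigwedge^{2}\g^{\ast}$ takes values in the $(1,1)$ piece. Semisimplicity of $\phi$ together with the no-trivial-submodule reduction gives $\phi(\R^{m})\R^{n} = \R^{n}$, which forces any closed left-invariant $1$-form, in particular $\theta$, to lie in $(\R^{m})^{\ast}$. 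Consequently, for every $\beta \in \g^{\ast}$ the form $d_{\theta}\beta = d\beta - \theta \wedge \beta$ has vanishing $(0,2)$ component, since $d\beta$ is of type $(1,1)$ and $\theta \wedge \beta$ is a sum of $(2,0)$ and $(1,1)$ pieces.

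It then suffices to verify that the $(0,2)$-component $\omega_{(0,2)}$ of $\omega$ is nonzero. If it were zero, then for every $v \in \R^{n}$ the contraction $\iota_{v}\omega$ would lie in $(\R^{m})^{\ast}$, yielding a linear map $\R^{n} \to (\R^{m})^{\ast}$; since $n > m$, this map would have a nontrivial kernel, contradicting non-degeneracy of $\omega$. Hence $\omega$ cannot be written as $d_{\theta}\beta$, so $[\omega]_{\theta} \neq 0$ in $H^{2}_{\theta}(\g)$, whence in $H^{2}_{\theta}(G/\Gamma)$ by Lemma~\ref{sekib}. The step I expect to require the most care is the derivation of $\theta \in (\R^{m})^{\ast}$ from semisimplicity and the no-trivial-submodule reduction; once that is in place, the rest is just bookkeeping with the three-piece decomposition.
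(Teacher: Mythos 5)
Your proposal is correct and follows essentially the same route as the paper: reduce via the preceding lemma to the case of no trivial submodule with $n>m$, observe that $\theta\in\frak{a}^{\ast}$ and that $d_{\theta}$ of a $1$-form has no component in $\bigwedge^{2}\frak{n}^{\ast}$, and conclude from the nonvanishing of the $(0,2)$-part of $\omega$ together with Lemma~\ref{sekib}. The only (harmless) difference is your verification that $\omega_{(0,2)}\neq 0$ via the contraction map $\frak{n}\to\frak{a}^{\ast}$ and a dimension count, where the paper instead notes that $\omega^{(n+m)/2}$ would vanish; both are valid.
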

\begin{proof}
By the above lemma, we can assume $\R^{n}$ has no trivial $\R^{m}$-submodule and $n=\dim [G,G]$ and by $\dim [G,G]>\frac{\dim G}{2}$ we have $m<n$.
Consider the Lie algebra $\g={\frak a}\ltimes {\frak n}$ where $\frak a$ and $\frak n$ are abelian Lie algebras  corresponding to $\R^{m}$ and $\R^{n}$ respectively.
Then we have $\bigwedge \g^{\ast}=\bigwedge {\frak a}^{\ast}\otimes \bigwedge {\frak n}^{\ast}$.
Let $\omega\in \bigwedge \g^{\ast}$ be a non-degenerate left-invariant $2$-form.
For the direct sum $\bigwedge^{2} {\frak g}^{\ast}=\bigwedge^{2} {\frak a}^{\ast}\oplus (\bigwedge^{1} {\frak a}^{\ast}\otimes \bigwedge^{1} {\frak n}^{\ast})\oplus  \bigwedge^{2} {\frak n}^{\ast} $ consider the decomposition $\omega=\omega^{\prime}+\omega^{\prime\prime}$ such that $\omega^{\prime}\in \bigwedge^{2} {\frak a}^{\ast}\oplus (\bigwedge^{1} {\frak a}^{\ast}\otimes \bigwedge^{1} {\frak n}^{\ast})$ and $\omega^{\prime\prime}\in \bigwedge^{2} {\frak n}^{\ast} $.
Suppose $\omega^{\prime\prime}=0$.
Then we have
 \[\omega^{\frac {n+m}{2}}\in \bigoplus_{p>\frac {n+m}{2}, p+q=n+m} (\bigwedge^{p} {\frak a}^{\ast}\otimes \bigwedge^{q} {\frak n}^{\ast}).\]
By the assumption $m<n$, we have $\omega^{\frac {n+m}{2}}=0$, but this contradicts non-degeneracy of $\omega$.
Thus we have $\omega^{\prime\prime}\not=0$.
Assume $\omega$ is LCS and its Lee form is $\theta$.
Since we assume that ${\frak n}$ has no trivial $\frak a$-submodule, we have $[{\frak a}, {\frak n}]={\frak n}$.
This implies that ${\rm Ker}\, d_{\bigwedge^{1}\g^{\ast}}=\bigwedge^{1}{\frak a}^{\ast}$ and hence 
$\theta\in \bigwedge^{1}{\frak a}^{\ast}$.
By the semi-direct product $\g={\frak a}\ltimes {\frak n}$, we have
\[d(\bigwedge^{p} {\frak a}^{\ast}\otimes \bigwedge^{q} {\frak n}^{\ast})\subset \bigwedge^{p+1} {\frak a}^{\ast}\otimes \bigwedge^{q} {\frak n}^{\ast}.
\]
Then we have 
\[d_{\theta}(\bigwedge^{p} {\frak a}^{\ast}\otimes \bigwedge^{q} {\frak n}^{\ast}) \subset \bigwedge^{p+1} {\frak a}^{\ast}\otimes \bigwedge^{q} {\frak n}^{\ast},\]
and hence we have
\[d_{\theta}(\bigwedge^{1}\g) \subset \left( \bigwedge^{2} {\frak a}^{\ast}\otimes \bigwedge^{0} {\frak n}^{\ast}\right)\oplus \left(\bigwedge^{1} {\frak a}^{\ast}\otimes \bigwedge^{1} {\frak n}^{\ast} \right)
.\]
On the other hand, we have $\omega^{\prime\prime}\not=0$.
Hence $\omega=\omega^{\prime}+\omega^{\prime\prime}$ is not $d_{\theta}$-exact.
By Lemma \ref{sekib} this implies the theorem.
\end{proof}
\begin{remark}
The cohomology class of any  symplectic form $\omega$ on  a compact $2n$-dimensional manifold is non-trivial (moreover it satisfies $[\omega]^{n}\not=0$,  and conversely for  a solvmanifold $M$ a cohomology class $\Omega\in H^{2}(M)$ satisfying $\Omega^{n}\not=0$ contains a symplectic form see \cite{KS}).
But for a LCS form $\omega$ with  Lee form $\theta$, it is possible that  $[\omega ]_{\theta}=0$.
For examples consider a nilmanifold $G/\Gamma$. For a left-invariant LCS form $\omega$ on $G/\Gamma$ with the Lee form $\theta$, we have $[\omega]_{\theta}=0$ in $H^{\ast}_{\theta}(G/\Gamma)$ because for a non-zero closed left-invariant $1$-form $\theta$ the cohomology $H^{2}_{\theta}(\g^{\ast})$ is trivial (see \cite{Dix}).
Thus Theorem \ref{LCS} is a peculiar phenomenon on a  solvmanifold.
\end{remark}
\begin{example}{\bf (Another example)}
Consider $G=\R\ltimes _{\phi} H(1)$ such that $\phi$ is given by
\[\phi(t)\left(
\begin{array}{ccc}
1&  x&  z  \\
0&     1&y      \\
0& 0&1 
\end{array}
\right)=\left(
\begin{array}{ccc}
1&  e^{t}x&  z  \\
0&     1&e^{-t}y      \\
0& 0&1 
\end{array}
\right)\]
where $H(1)$ is the $3$-dimensional Heisenberg Lie group.
It is known that $G$ has a lattice $\Gamma$ (see \cite{ACF} or \cite{Sa2}).
$G$ admits a left-invariant complex structure $J$ and $(G/\Gamma, J)$ admits a LCK metric but does not admit a Vaisman metric (see \cite{Bel}).
In \cite{Ban} Banyaga gave LCK left-invariant forms $\omega$ and $\omega^{\prime}$ with  Lee forms $\theta$ and $\theta^{\prime}$ respectively such that $[\omega]_{\theta}=0$ and $[\omega^{\prime}]_{\theta^{\prime}}\not=0$.
\end{example}

\begin{theorem}\label{MTTT}
Let $G=\R^{m}\ltimes_{\phi} \R^{n}$  such that $\phi$ is a semi-simple action.
Suppose  $\dim [G,G]>\frac{\dim G}{2}$, $G$ has a lattice $\Gamma$ and a left-invariant complex structure $J$ and $b_{1}(G/\Gamma)=b_{1}(\g)$.
Then $(G/\Gamma,J)$ admits no Vaisman metric. 
\end{theorem}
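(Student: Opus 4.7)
The plan is to argue by contradiction: supposing that $(G/\Gamma,J)$ carries a Vaisman metric $g_V$ with fundamental form $\omega_V$ and (closed) Lee form $\theta_V$, I will manufacture a left-invariant LCK form on $(G/\Gamma,J)$, and then derive a contradiction by playing Theorem \ref{LCS} against the remark following Theorem \ref{OOR}. The first step is to exploit the hypothesis $b_1(G/\Gamma)=b_1(\g)$: combined with the injectivity in Lemma \ref{sekib}, equality of dimensions forces $i^{\ast}\colon H^1(\g)\to H^1(G/\Gamma)$ to be an isomorphism, so one can write $\theta_V=\theta'+df$ with $\theta'\in\bigwedge^1\g^{\ast}$ closed and left-invariant and $f\in C^{\infty}(G/\Gamma)$. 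A conformal rescaling then gives $\omega'=e^{-f}\omega_V$ and $g'=e^{-f}g_V$, and the direct computation
\[
d(e^{-f}\omega_V)=-e^{-f}df\wedge\omega_V+e^{-f}\theta_V\wedge\omega_V=\theta'\wedge\omega'
\]
shows that $(g',J)$ is LCK with left-invariant Lee form $\theta'$.

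Next I would left-average $g'$ over $\g$: define $g_{inv}(X,Y)=\int_{G/\Gamma}g'(\tilde X,\tilde Y)\,d\mu$. Pointwise positivity of the integrand yields a positive definite symmetric bilinear form on $\g$; since $J$ is left-invariant it commutes with $\tilde{(\cdot)}$, so $g_{inv}$ inherits $J$-Hermiticity from $g'$. Its fundamental form is precisely $\omega_{inv}=I(\omega')\in\bigwedge^2\g^{\ast}$, which is therefore left-invariant and nondegenerate. Because Lemma \ref{sekib} makes $I$ a chain map for $d_{\theta'}$, one has $d_{\theta'}\omega_{inv}=I(d_{\theta'}\omega')=0$, so $g_{inv}$ is a genuine \emph{left-invariant} LCK metric on $(G/\Gamma,J)$ with left-invariant Lee form $\theta'$.

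For the finale, Theorem \ref{LCS} applied to this left-invariant LCS form gives $[\omega_{inv}]_{\theta'}\neq 0$ in $H^2_{\theta'}(G/\Gamma)$. On the other hand, the remark following Theorem \ref{OOR}, applied to the other LCK form $g_{inv}$ on the Vaisman manifold $(G/\Gamma,J,g_V)$, forces $[\omega_{inv}]_{\theta'}=0$ — the hypothesis is satisfied because $\theta'$ is cohomologous to $\theta_V$ by construction. This is the desired contradiction. I expect the main obstacle to be conceptual rather than computational: one must recognize that the $b_1$-hypothesis is precisely what permits a conformal change making the Lee form left-invariant, and that the subsequent averaging must be performed on the \emph{Hermitian metric} rather than directly on the fundamental form, so that positive definiteness — and hence nondegeneracy of $\omega_{inv}$ — is preserved.
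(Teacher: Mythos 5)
Your proposal is correct and follows essentially the same route as the paper: use the $b_{1}$ hypothesis to replace the Lee form by a cohomologous left-invariant one via a conformal rescaling, average to obtain a left-invariant LCK metric (the paper averages the $2$-form $e^{f}\omega$ and checks positivity of $(e^{f}\omega)_{inv}(-,J-)$, which is equivalent to your averaging of the Hermitian metric), and then pit Theorem \ref{LCS} against the Vaisman vanishing. The only cosmetic difference is at the end, where you invoke the Ornea--Verbitsky remark for the averaged LCK form, while the paper transfers the nonvanishing of the invariant class back to $[\omega]_{\theta}$ and contradicts Theorem \ref{OOR} directly; both are valid.
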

\begin{proof}
Suppose $(G/\Gamma, J)$ has a Vaisman metric (not necessarily left-invariant) $g$ with the fundamental form $\omega$ and  Lee form $\theta$.
By $b_{1}(G/\Gamma)=b_{1}(\g)$, the inclusion $\bigwedge \g^{\ast}\subset A^{\ast}(G/\Gamma)$ induces an isomorphism of the first cohomology, and so we have a closed invariant $1$-form $\theta_{0}\in \bigwedge \g^{\ast}$ and a function $f$ on $G/\Gamma$ such that $\theta_{0}-\theta=df$.
By the map $A^{\ast}(G/\Gamma)\ni \alpha\mapsto  e^{f}\alpha$,  we have an isomorphism $H^{\ast}_{\theta}(G/\Gamma)\cong H^{\ast}_{\theta_{0}}(G/\Gamma)$.
Consider the invariant form $(e^{f} \omega)_{inv}$ given in section 2.
Then by the definition of  $(e^{f} \omega)_{inv}$, $(e^{f} \omega)_{inv}$ is $J$-invariant and $g_{0}=(e^{f} \omega)_{inv}(-, J-) $ is a positive definite.
By $d_{\theta_{0}}((e^{f} \omega)_{inv} )=I\circ d_{\theta_{0}}(e^{f} \omega )=0$, $g_{0}$ is a left-invariant LCK metric with  Lee form $\theta_{0}$.
Then by the above construction, we have a left-invariant LCK metric on $G/\Gamma$.
By Theorem \ref{LCS},  we have $[(e^{f} \omega)_{inv}]_{\theta_{0}}\not=0$ in $H^{2}_{\theta_{0}}(G/\Gamma)$ and hence $[\omega ]_{\theta}\not=0$ in $H^{2}_{\theta}(G/\Gamma)$.
But this contradicts Theorem \ref{OOR}.
Hence the theorem follows.
\end{proof}
\begin{remark}
For a non-degenerate $2$-form $\omega$, the left-invariant $2$-form $\omega_{inv}$ is not non-degenerate in general.
In this proof, the assumption of the existence of $\omega$-compatible left-invariant almost complex structure is important.
We can rewrite Theorem \ref{LCS}. 
\end{remark}
\begin{theorem}
Let $G=\R^{m}\ltimes_{\phi} \R^{n}$ such that $\phi$ is a semi-simple action.
Suppose  $\dim [G,G]>\frac{\dim G}{2}$ , $G$ has a lattice $\Gamma$ and $b_{1}(G/\Gamma)=b_{1}(\g)$.
Then for a  LCS   form (not necessarily left-invariant)  $\omega$ admitting a left-invariant $\omega$-compatible almost  complex structure, with  Lee form $\theta$, the $d_{\theta}$-cohomology class of $\omega $ is not $0$ in $H_{\theta}^{2}(G/\Gamma)$.
\end{theorem}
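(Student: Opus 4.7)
The plan is to repeat almost verbatim the averaging construction used in the proof of Theorem \ref{MTTT}, but to halt one step earlier: instead of contradicting Theorem \ref{OOR}, the conclusion $[\omega]_{\theta}\ne 0$ is itself what we want. The role of the left-invariant $\omega$-compatible almost complex structure $J$ is precisely to guarantee that, after averaging, the resulting left-invariant $2$-form remains non-degenerate so that Theorem \ref{LCS} can be applied.

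First, since $b_{1}(G/\Gamma)=b_{1}(\g)$, the inclusion $\bigwedge\g^{\ast}\hookrightarrow A^{\ast}(G/\Gamma)$ is surjective on $H^{1}$, so I would choose a left-invariant closed $1$-form $\theta_{0}\in \bigwedge^{1}\g^{\ast}$ and a function $f\in C^{\infty}(G/\Gamma)$ with $\theta_{0}-\theta=df$. Multiplication by $e^{f}$ then provides an isomorphism of cochain complexes $(A^{\ast}(G/\Gamma),d_{\theta})\to (A^{\ast}(G/\Gamma),d_{\theta_{0}})$, sending the class $[\omega]_{\theta}$ to $[e^{f}\omega]_{\theta_{0}}$.

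Next, I would apply the invariantization map $I$ of Section 2 to obtain $(e^{f}\omega)_{inv}\in \bigwedge^{2}\g^{\ast}$. Because $J$ is left-invariant, for $X\in \g$ we have $J\tilde{X}=\widetilde{JX}$, and combined with pointwise $J$-invariance of $e^{f}\omega$ this yields $(e^{f}\omega)_{inv}(JX,JY)=(e^{f}\omega)_{inv}(X,Y)$. Moreover, writing $g(-,-)=\omega(-,J-)$ for the Riemannian metric given by $\omega$-compatibility, one computes
\[
(e^{f}\omega)_{inv}(X,JX)=\int_{G/\Gamma}e^{f}\,g(\tilde{X},\tilde{X})\,d\mu>0
\]
for every $X\ne 0$. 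Therefore $(e^{f}\omega)_{inv}$ is the fundamental form of a left-invariant Hermitian metric on $(G/\Gamma,J)$, and in particular it is non-degenerate. By Lemma \ref{sekib} it is $d_{\theta_{0}}$-closed, so it is a left-invariant LCS form with Lee form $\theta_{0}$.

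Finally, Theorem \ref{LCS} applied to $(e^{f}\omega)_{inv}$ gives $[(e^{f}\omega)_{inv}]_{\theta_{0}}\ne 0$ in $H^{2}_{\theta_{0}}(G/\Gamma)$. Since $I$ sends $[e^{f}\omega]_{\theta_{0}}$ to $[(e^{f}\omega)_{inv}]_{\theta_{0}}$, the class $[e^{f}\omega]_{\theta_{0}}$ must itself be nonzero, and transporting back through multiplication by $e^{-f}$ yields $[\omega]_{\theta}\ne 0$ in $H^{2}_{\theta}(G/\Gamma)$. The one substantive point to verify carefully is the non-degeneracy of $(e^{f}\omega)_{inv}$: without both the taming condition $\omega(X,JX)>0$ and the left-invariance of $J$ (so that $J\tilde{X}=\widetilde{JX}$), averaging can destroy non-degeneracy and the whole argument would collapse; this is exactly the obstruction that the compatibility hypothesis is tailored to remove.
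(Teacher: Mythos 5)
Your proposal is correct and follows essentially the same route as the paper: the statement is exactly the "rewritten" form of Theorem \ref{LCS} extracted from the proof of Theorem \ref{MTTT}, namely replacing $\theta$ by a cohomologous left-invariant $\theta_{0}$ via $e^{f}$, averaging $e^{f}\omega$ with the map $I$, using the left-invariant compatible $J$ to keep the averaged form non-degenerate, and then invoking Theorem \ref{LCS} together with the injectivity from Lemma \ref{sekib}. Your closing remark correctly identifies the non-degeneracy of $(e^{f}\omega)_{inv}$ as the one point where the compatibility hypothesis is genuinely needed, which matches the paper's own remark following Theorem \ref{MTTT}.
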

\begin{remark}\label{IISO}
By Lemma \ref{sekib}, if $b_{p}(\g)=b_{p}(G/\Gamma)$ we have an isomorphism $H^{p}(\g)\cong H^{p}(G/\Gamma)$.
If $G$ is completely solvable (i.e. for any $g\in G$ the all eigenvalues of the adjoint operator ${\rm Ad}_{g}$ are real numbers), then we have an isomorphism $H^{\ast}(\g)\cong H^{\ast}(G/\Gamma)$ (see \cite{Hatt}).
\end{remark}

\section{On the condition  $H^{1}(\g)\cong H^{1}(G/\Gamma)$}
Let $G$ be a $n$-dimensional simply connected solvable Lie group, $\g$ be the Lie algebra.
and ${\rm Ad}: G\to {\rm Aut}(\g)$ be the adjoint representation.
Denote by ${\rm Ad}_{s g}$  the semi-simple part of ${\rm Ad}_{g}\in {\rm Aut}(\g)$ for $g\in G$.
Since representations of $G$ are triangulizable in $\C$ by Lie's theorem, ${\rm Ad}_{s}:G\to {\rm Aut} (\g_{\C}) $ is a diagonalizable representation.
\begin{definition}\label{wea}
We consider the diagonalization  ${\rm diag}(\alpha_{1}, \dots \alpha_{n})$  of ${\rm Ad}_{s}$.
We call $G$ weakly completely solvable if each $\alpha_{i}$ is not a non-trivial unitary character of $G$.
\end{definition}
Let $\bf T$ be the Zariski-closure of ${\rm Ad}_{s}(G)$ in ${\rm Aut}(\g_{\C})$.
Suppose $G$ has a lattice $\Gamma$.
Let $A_{\Gamma}=\{\alpha^{\prime}_{i}\}$ be the set of characters of $G$ such that for $\alpha^{\prime}_{i}\in A_{\Gamma}$ we can write $\alpha^{\prime}_{i}= \alpha_{i}\circ {\rm Ad}_{s}$ for an algebraic character $\alpha_{i}$ of the algebraic group $\bf T$ and the restriction $\alpha^{\prime}_{i\vert_{\Gamma}}$ is trivial.
\begin{lemma}
$\alpha^{\prime}_{i}\in A_{\Gamma}$ is a unitary character.
\end{lemma}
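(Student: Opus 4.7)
The plan is to exploit compactness of $G/\Gamma$ together with the fact that $\alpha^{\prime}_{i}$ is a bona fide group homomorphism. First I would note that since $\alpha_{i}$ is an algebraic character of the torus $\bf T$ and ${\rm Ad}_{s}\colon G\to \bf T$ is a group homomorphism (as $\bf T$ is the maximal-torus quotient of the Zariski closure of ${\rm Ad}(G)$ in ${\rm Aut}(\g_{\C})$), the composite $\alpha^{\prime}_{i}=\alpha_{i}\circ {\rm Ad}_{s}\colon G\to \C^{\ast}$ is a continuous character of $G$.

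Next, using the hypothesis $\alpha^{\prime}_{i\vert_{\Gamma}}=1$, I would observe that for all $g\in G$ and $\gamma\in\Gamma$ one has $\alpha^{\prime}_{i}(g\gamma)=\alpha^{\prime}_{i}(g)\alpha^{\prime}_{i}(\gamma)=\alpha^{\prime}_{i}(g)$, so $\alpha^{\prime}_{i}$ is constant on right cosets and descends to a continuous map $\bar\alpha^{\prime}_{i}\colon G/\Gamma\to \C^{\ast}$. Because $\Gamma$ is a lattice, $G/\Gamma$ is compact, hence the image $\alpha^{\prime}_{i}(G)=\bar\alpha^{\prime}_{i}(G/\Gamma)$ is a compact subset of $\C^{\ast}$.

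Finally, $\alpha^{\prime}_{i}(G)$ is also a subgroup of $\C^{\ast}$ (being the image of a group homomorphism). Under the identification $\C^{\ast}\cong \R_{>0}\times S^{1}$, the projection of $\alpha^{\prime}_{i}(G)$ onto $\R_{>0}$ is a compact subgroup, which must be trivial. Hence $\alpha^{\prime}_{i}(G)\subset S^{1}=U(1)$, i.e.\ $\alpha^{\prime}_{i}$ is a unitary character, as required.

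There is really no hard step here; the argument is a standard compactness trick. The only point requiring a bit of care is making sure $\alpha^{\prime}_{i}$ genuinely is a continuous group homomorphism $G\to\C^{\ast}$, which uses that ${\rm Ad}_{s}$ is a morphism into the torus quotient $\bf T$ of the Zariski closure of ${\rm Ad}(G)$ (a solvable algebraic group, for which the projection onto its torus quotient is a well-defined homomorphism).
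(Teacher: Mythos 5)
Your proof is correct and follows essentially the same route as the paper: since $\alpha'_{i}$ is trivial on $\Gamma$ it descends to the compact quotient $G/\Gamma$, so its image is a compact subgroup of $\C^{*}$ and therefore lies in $S^{1}$. The paper's proof is just a two-line version of this same compactness argument (leaving the final step, that a compact subgroup of $\C^{*}$ is contained in $S^{1}$, implicit).
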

\begin{proof}
Since the restriction $\alpha^{\prime}_{i\vert_{\Gamma}}$ is trivial, $\alpha^{\prime}_{i}$ induces the function of $G/\Gamma$.
Since $G/\Gamma$ is compact,  the image of $ \alpha^{\prime}_{i}$ is a compact subgroup of $\C^{\ast}$.

\end{proof}
Consider the subDGA 
\[\bigoplus_{\alpha_{i}^{\prime}\in A_{\Gamma}} \left(\alpha^{\prime}_{i}\cdot \bigwedge \g^{\ast}_{\C}\right)\subset A^{\ast}_{\C}(G/\Gamma).\]
Write $\alpha^{\prime}_{i}= \alpha_{i}\circ {\rm Ad}_{s}$ for an algebraic character $\alpha_{i}$ of $\bf T$.
We have the action of $\bf T$ on $\alpha^{\prime}_{i}\cdot \bigwedge \g^{\ast}_{\C}$ given by 
\[t\cdot (\alpha^{\prime}_{i}\omega)=\alpha_{i}(t)^{-1}\alpha^{\prime}_{i}t^{\ast}(\omega).\]
Let
\[\bigoplus_{\alpha_{i}^{\prime}\in A_{\Gamma}} \left(\alpha^{\prime}_{i}\cdot \bigwedge \g^{\ast}_{\C}\right)^{\bf T}\]
be the subDGA which consists of $\bf T$-invariant elements of $\bigoplus_{\alpha_{i}^{\prime}\in A_{\Gamma}} \left(\alpha^{\prime}_{i}\cdot \bigwedge \g^{\ast}_{\C}\right)$.
\begin{theorem}\label{isoc}{\rm ( \cite[Corollary 7.6]{K2})}
The two  inclusions 
\[ \bigoplus_{\alpha_{i}^{\prime}\in A_{\Gamma}} \left(\alpha^{\prime}_{i}\cdot \bigwedge \g^{\ast}_{\C}\right)^{\bf T}\subset \bigoplus_{\alpha_{i}^{\prime}\in A_{\Gamma}} \left(\alpha^{\prime}_{i}\cdot \bigwedge \g^{\ast}_{\C}\right)\subset A^{\ast}_{\C}(G/\Gamma).\]
induce  cohomology isomorphisms.
\end{theorem}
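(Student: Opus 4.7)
The plan is to establish the two inclusions as quasi-isomorphisms in separate steps. For the inner inclusion
\[\bigoplus_{\alpha^{\prime}_{i}\in A_{\Gamma}}\left(\alpha^{\prime}_{i}\cdot\bigwedge\g^{\ast}_{\C}\right)^{\bf T}\hookrightarrow\bigoplus_{\alpha^{\prime}_{i}\in A_{\Gamma}}\left(\alpha^{\prime}_{i}\cdot\bigwedge\g^{\ast}_{\C}\right),\]
I would run a standard averaging argument. Since $\bf T$ is a complex algebraic torus, a maximal compact subgroup $K\subset\bf T$ is a compact real torus that is Zariski dense in $\bf T$, so ${\bf T}$-invariants and $K$-invariants coincide. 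The prescribed twisted action $t\cdot(\alpha^{\prime}_{i}\omega)=\alpha_{i}(t)^{-1}\alpha^{\prime}_{i}\,t^{\ast}(\omega)$ is by DGA automorphisms commuting with $d$, and averaging over $K$ yields a chain projection onto the invariants. Because $K$ is connected and compact, the averaging operator is chain homotopic to the identity, hence induces an isomorphism on cohomology.

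For the outer inclusion
\[\bigoplus_{\alpha^{\prime}_{i}\in A_{\Gamma}}\left(\alpha^{\prime}_{i}\cdot\bigwedge\g^{\ast}_{\C}\right)\hookrightarrow A^{\ast}_{\C}(G/\Gamma),\]
which is the substantive part and extends Hattori's theorem beyond the completely solvable setting, I would use a Mostow-type fibration. The characters $\alpha^{\prime}_{i}\in A_{\Gamma}$, being trivial on $\Gamma$, descend to well-defined smooth $\C$-valued functions on $G/\Gamma$; twisting $\bigwedge\g^{\ast}_{\C}$ by them exactly encodes the flat line bundles coming from eigenvalues of ${\rm Ad}_{s}$ that become trivial modulo $\Gamma$. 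Concretely, $G/\Gamma$ sits as a principal nilmanifold bundle over a torus base $T_{0}$, and the associated Leray--Serre spectral sequence has $E_{2}^{p,q}=H^{p}(T_{0};\mathcal{H}^{q}(\text{fiber}))$. Nomizu's theorem computes the fiber cohomology via left-invariant forms, while the local systems that contribute nontrivially on $T_{0}$ are exactly those parameterized by $A_{\Gamma}$. Matching the spectral sequence pages against the algebraic DGA on the left, together with multiplicativity, should exhibit the inclusion as a quasi-isomorphism.

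The main obstacle will lie in this second quasi-isomorphism. In the completely solvable case ${\rm Ad}_{s}$ has no nontrivial unitary characters, so no twisting is needed and $H^{\ast}(\g)\cong H^{\ast}(G/\Gamma)$ directly. As soon as unitary characters appear, $\bigwedge\g^{\ast}$ alone undercounts the cohomology, and the enlarged complex must be shown to contribute exactly those cohomology classes associated to characters that die on $\Gamma$---neither more nor fewer. Controlling both ends of this accounting, and verifying that higher differentials in the spectral sequence vanish on the relevant pages by a $\bf T$-weight decomposition argument, is where the real difficulty will reside.
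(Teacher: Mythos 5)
First, a point of reference: the paper itself gives no proof of this statement --- it is imported verbatim from \cite[Corollary 7.6]{K2} --- so there is no internal argument to measure yours against; your sketch has to stand on its own, and as it stands each half has a genuine gap.

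For the inner inclusion, the claim that averaging over a maximal compact $K\subset {\bf T}$ is chain homotopic to the identity is false for an abstract action on a cochain complex. The Cartan homotopy $L_{X}=d\iota_{X}+\iota_{X}d$ that underlies this statement requires $K$ to act by diffeomorphisms of a manifold, whereas here ${\bf T}$ is the Zariski closure of ${\rm Ad}_{s}(G)$ in ${\rm Aut}(\g_{\C})$ and the twisted action $t\cdot(\alpha^{\prime}_{i}\omega)=\alpha_{i}(t)^{-1}\alpha^{\prime}_{i}t^{\ast}(\omega)$ is purely algebraic on a finite-dimensional complex; it is not induced by any action on $G/\Gamma$. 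A one-line counterexample to your homotopy claim: $K=S^{1}$ acting on the complex $\C$ concentrated in degree $0$ by a nontrivial character has zero invariants but $H^{0}=\C$, and the averaging map is $0$. What the inner quasi-isomorphism actually requires is that every nonzero ${\bf T}$-weight subcomplex of $\bigoplus\alpha^{\prime}_{i}\cdot\bigwedge\g^{\ast}_{\C}$ be acyclic; that is a substantive vanishing statement of Dixmier/Hochschild--Serre type, not a formal consequence of compactness and connectedness of $K$.

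For the outer inclusion you describe a strategy (Mostow fibration, Leray--Serre, Nomizu) but explicitly defer the matching of the $E_{2}$-page with the algebraic complex, the identification of the contributing local systems with $A_{\Gamma}$, and the control of higher differentials to ``where the real difficulty will reside.'' That difficulty \emph{is} the theorem: precisely when ${\rm Ad}_{s}$ has nontrivial unitary eigencharacters the monodromy of the Mostow fibration on the fiber cohomology fails to be unipotent, which is why Hattori's argument breaks down and why the twisted complex is introduced in the first place; nothing in the sketch explains how the spectral sequence is controlled in that situation. So the proposal identifies the right objects but does not constitute a proof. For comparison, the cited source \cite{K2} establishes the result by a different route, through the algebraic hull of $G$ and Mostow's theorem on the cohomology of solvmanifolds with coefficients in local systems, rather than by a direct Leray--Serre computation.
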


By this theorem, we have:
\begin{theorem}
For a weakly completely solvable Lie group $G$ with a lattice $\Gamma$, we have an isomorphism
\[H^{1}(\g)\cong H^{1}(G/\Gamma).
\]
\end{theorem}
\begin{proof}
For a $1$-form 
\[\omega=\alpha^{\prime}_{1}\omega_{1}+\dots+\alpha^{\prime}_{r}\omega_{r}\in \bigoplus_{\alpha_{i}^{\prime}\in A_{\Gamma}} \left(\alpha^{\prime}_{i}\cdot \bigwedge^{1} \g^{\ast}_{\C}\right)^{\bf T},
\]
we have
\[\omega=({\rm Ad}_{sg})^{\ast}\omega
=\alpha^{\prime}_{1}(g)^{-1}\alpha^{\prime}_{1}{\rm Ad}_{sg}^{\ast}(\omega_{1})+\dots+\alpha^{\prime}_{r}(g)^{-1}\alpha^{\prime}_{r}{\rm Ad}_{sg}^{\ast}(\omega_{r}).
\]
Thus we have ${\rm Ad}_{sg}^{\ast}(\omega_{i})=\alpha^{\prime}_{i}(g)\omega_{i}$ and so
the unitary characters $\alpha_{i}^{\prime}$ are eigencharacters of the diagonalizable representation ${\rm Ad}^{\ast}_{s}$.
But by the condition of a weakly completely solvable Lie group, $\alpha_{i}^{\prime}$ is trivial character and hence we have 
\[\bigoplus_{\alpha_{i}^{\prime}\in A_{\Gamma}} \left(\alpha^{\prime}_{i}\cdot \bigwedge \g^{\ast}_{\C}\right)^{\bf T}
\subset \bigwedge \g^{\ast}_{\C}.\]
By Theorem \ref{isoc}, we have
\[\dim_{\C} H^{1}(G/\Gamma,\C)=\dim_{\C} H^{1}(\g_{\C}).
\]
\end{proof}
Thus we have:
\begin{corollary}\label{WCS}
Let $G=\R^{m}\ltimes_{\phi} \R^{n}$  such that $\phi$ is a semi-simple action.
Suppose $G$ is weakly completely solvable,  $\dim [G,G]>\frac{\dim G}{2}$ and $G$ has a lattice $\Gamma$ and a left-invariant complex structure $J$.
Then $(G/\Gamma,J)$ admits no Vaisman metric. 
\end{corollary}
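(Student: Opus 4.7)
The plan is to derive this corollary by splicing together the two main results already established in the paper: Theorem \ref{MTTT} (non-existence of Vaisman metrics under the hypothesis $b_1(G/\Gamma)=b_1(\g)$) and the theorem immediately preceding (the $H^1$-isomorphism for weakly completely solvable groups with a lattice). Since all the structural hypotheses of Theorem \ref{MTTT} except $b_1(G/\Gamma)=b_1(\g)$ are assumed outright in the corollary, the only task is to verify this Betti-number equality from weak complete solvability.

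First, I would invoke the theorem that for a weakly completely solvable Lie group $G$ with a lattice $\Gamma$ one has the canonical isomorphism $H^1(\g)\cong H^1(G/\Gamma)$. Taking complex dimensions (or equivalently real dimensions, since both sides are defined over $\R$ and the inclusion $\bigwedge \g^{\ast}\hookrightarrow A^{\ast}(G/\Gamma)$ is defined over $\R$), this gives $b_1(\g)=b_1(G/\Gamma)$. In particular, the hypothesis on $b_1$ needed by Theorem \ref{MTTT} is satisfied.

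Second, I would simply check off the remaining hypotheses of Theorem \ref{MTTT}: the semidirect product structure $G=\R^{m}\ltimes_{\phi}\R^{n}$, the semi-simplicity of $\phi$, the dimension condition $\dim[G,G]>\tfrac{\dim G}{2}$, the existence of the lattice $\Gamma$, and the existence of a left-invariant complex structure $J$. All of these are present in the hypotheses of the corollary. Applying Theorem \ref{MTTT} then yields that $(G/\Gamma,J)$ admits no Vaisman metric, which is exactly the conclusion required.

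There is no genuine obstacle in this step; the corollary is essentially a book-keeping consequence, and the substantive arguments have already been carried out in the proofs of Theorem \ref{MTTT} (via Theorem \ref{LCS} and the averaging trick of Lemma \ref{sekib}) and in the cohomological argument for weakly completely solvable groups (via Theorem \ref{isoc}). The only mild point worth noting is that Theorem \ref{MTTT} is stated over the reals while the $H^1$-isomorphism is proved over $\C$, but this causes no issue since Betti numbers are invariant under extension of scalars.
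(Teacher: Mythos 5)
Your proposal is correct and matches the paper's own derivation: the corollary is stated immediately after the $H^{1}$-isomorphism theorem with "Thus we have," i.e.\ the paper likewise obtains $b_{1}(G/\Gamma)=b_{1}(\g)$ from weak complete solvability and then applies Theorem \ref{MTTT}. Nothing further is needed.
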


\begin{remark}
We call a Lie group $G$ of exponential type if the exponential map $\exp :\g\to G$ is a diffeomorphism.
A simply connected solvable Lie group $G$ is of exponential type if and only if for any $g\in G$ ${\rm Ad}_{sg}$ has no unitary eigenvalue $\not=1$ (see \cite{Aus}).
Thus the class of weakly solvable Lie groups contains the class of  solvable Lie groups  of exponential type.
Since a non-trivial holomorphic character of a complex solvable Lie group is not unitary,
the  class of weakly solvable Lie groups contains the class of complex solvable Lie groups.
Since for a complex solvable Lie group $G$, the abelianization $G/[G,G]$ is also complex and hence $b_{1}(G/\Gamma)=b_{1}(\g)$ is even.
Since the first betti number of a compact Vaisman manifold is odd,
 a compact complex parallelizable solvmanifold $G/\Gamma$ admits no complex structure admitting a compatible Vaisman metric.
\end{remark}
\begin{remark}
For a weakly completely solvable Lie group $G$ with a lattice $\Gamma$,  an isomorphism
\[H^{p}(\g)\cong H^{p}(G/\Gamma)
\]
does not hold  for $2\le p$ in general.
For example,  we consider the complex solvable Lie group $G=\C\ltimes_{\phi} \C^{2}$ with $\phi(x)={\rm diag}(e^{x},e^{-x})$.
By the above theorem, for any lattice $\Gamma$ we have an isomorphism $H^{1}(\g)\cong H^{1}(G/\Gamma)$.
But for some lattice $\Gamma$, an isomorphism $H^{2}(\g)\cong H^{2}(G/\Gamma)$ does not hold (see \cite{BT}).
\end{remark}
For a simply connected solvable Lie group $G$ with a left-invariant complex structure $J$   and a lattice $\Gamma$ satisfying $ H^{1}(G/\Gamma)\cong  H^{1}(\g)$,
in the proof of Theorem \ref{MTTT}, we observe that  if $G/\Gamma$ admits a non-left-invariant LCK, then
 we can make a left-invariant LCK metric $(e^{f} \omega)_{inv}(-, J-)$.
Hence we have: 
\begin{corollary}
For a weakly completely solvabale Lie group $G$ with a left-invariant complex structure $J$ and a lattice $\Gamma$, $(G/\Gamma,J)$ admits a LCK metric if and only if $(G/\Gamma,J)$ admits a left-invariant LCK metric.
\end{corollary}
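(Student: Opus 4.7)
The plan is to prove only the forward direction, since if $G/\Gamma$ admits a left-invariant LCK metric then it trivially admits a LCK metric. So I would assume that $(G/\Gamma,J)$ admits a (possibly non-left-invariant) LCK metric $g$ with fundamental form $\omega$ and Lee form $\theta$, and manufacture a left-invariant LCK metric out of it.

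The first step is to normalize the Lee form. By the theorem just proved, $H^{1}(\g)\cong H^{1}(G/\Gamma)$ for $G$ weakly completely solvable, so the inclusion $\bigwedge^{1}\g^{\ast}\subset A^{1}(G/\Gamma)$ is an isomorphism on first cohomology. Hence there exist a closed left-invariant $1$-form $\theta_{0}\in\bigwedge^{1}\g^{\ast}$ and a smooth function $f$ on $G/\Gamma$ with $\theta_{0}-\theta=df$. I would then recycle the construction from the proof of Theorem \ref{MTTT}: multiplication by $e^{f}$ is an isomorphism of cochain complexes $(A^{\ast}(G/\Gamma),d_{\theta})\to(A^{\ast}(G/\Gamma),d_{\theta_{0}})$, so $e^{f}\omega$ is $d_{\theta_{0}}$-closed. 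Applying the averaging map $I$ of Lemma \ref{sekib}, which is a homomorphism of $d_{\theta_{0}}$-complexes because $\theta_{0}$ is left-invariant, yields a left-invariant $2$-form $(e^{f}\omega)_{inv}$ that is $d_{\theta_{0}}$-closed.

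The remaining step is to verify that $g_{0}=(e^{f}\omega)_{inv}(-,J-)$ is a genuine left-invariant LCK metric with Lee form $\theta_{0}$. Because $J$ is left-invariant, the averaging operator preserves $J$-invariance, so $(e^{f}\omega)_{inv}$ is of type $(1,1)$ and $g_{0}$ is symmetric. For positive definiteness, for any nonzero $X\in\g$ with induced vector field $\tilde X$ one has $e^{f}\omega(\tilde X,J\tilde X)=-e^{f}g(\tilde X,\tilde X)$, an everywhere strictly negative function on $G/\Gamma$; integrating against the bi-invariant probability measure preserves this strict sign, so $-(e^{f}\omega)_{inv}(X,JX)>0$. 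Thus $g_{0}$ is a positive definite left-invariant Hermitian metric whose fundamental form $(e^{f}\omega)_{inv}$ satisfies $d_{\theta_{0}}(e^{f}\omega)_{inv}=0$, i.e.\ it is left-invariant LCK with Lee form $\theta_{0}$.

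No substantive new ideas are needed beyond what is already used in the proof of Theorem \ref{MTTT}; the only point that requires a moment of care is the positive definiteness, which follows because $f$ is real-valued so $e^{f}>0$ everywhere and the averaging of a strictly signed function retains its sign. The role of weak complete solvability is confined to the very first step, where it guarantees $b_{1}(G/\Gamma)=b_{1}(\g)$ and hence the existence of the function $f$.
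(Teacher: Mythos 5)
Your proposal is correct and follows essentially the same route as the paper: the paper derives this corollary precisely by invoking the isomorphism $H^{1}(\g)\cong H^{1}(G/\Gamma)$ for weakly completely solvable $G$ and then reusing the construction $(e^{f}\omega)_{inv}(-,J-)$ from the proof of Theorem \ref{MTTT}. Your explicit verification of positive definiteness via averaging a strictly signed function is a point the paper leaves implicit, but it is the intended argument.
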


\section{Oeljeklaus-Toma manifolds  as solvmanifolds}
In this section, we give solvmanifold-presentations $G/\Gamma$ of examples given by Oeljeklaus and Toma in \cite{OT}.
By such presentations, we will show the non-existence of Vaisman metrics and give some remarks.

Let $K$ be a finite extension field of $\Q$ of degree $s+2t$ ($s>0$, $t>0$).
Suppose $K$ admits embeddings $\sigma_{1},\dots \sigma_{s},\sigma_{s+1},\dots, \sigma_{s+2t}$ into $\C$ such that $\sigma_{1},\dots ,\sigma_{s}$ are real embeddings and $\sigma_{s+1},\dots, \sigma_{s+2t}$ are complex ones satisfying $\sigma_{s+i}=\bar \sigma_{s+i+t}$ for $1\le i\le t$. 
For any $s$ and $t$, we can choose $K$ admitting such embeddings (see \cite{OT}).
Let ${\mathcal O}_{K}$ be the ring of algebraic integers of $K$, ${\mathcal O}_{K}^{\ast}$ the group of units in ${\mathcal O}_{K}$ and 
\[{\mathcal O}_{K}^{\ast\, +}=\{a\in {\mathcal O}_{K}^{\ast}: \sigma_{i}(a)>0 \,\, {\rm for \,\,  all}\,\, 1\le i\le s\}.
\]  
Define $\sigma :{\mathcal O}_{K}\to \R^{s}\times \C^{t}$ by
\[\sigma(a)=(\sigma_{1}(a),\dots ,\sigma_{s}(a),\sigma_{s+1}(a),\dots ,\sigma_{s+t}(a))
\]
for $a\in {\mathcal O}_{K}$.
Define $l:{\mathcal O}_{K}^{\ast\, +}\to \R^{s+1}$ by 
\[l(a)=(\log \vert \sigma_{1}(a)\vert,\dots ,\log \vert \sigma_{s}(a)\vert , 2\log \vert \sigma_{s+1}(a)\vert,\dots ,2\log \vert \sigma_{s+t}(a)\vert)
\]
for $a\in {\mathcal O}_{K}^{\ast\, +}$.
Then by Dirichlet's units theorem, $l({\mathcal O}_{K}^{\ast\, +})$ is a lattice in the vector space $L=\{x\in \R^{s+t}\vert \sum_{i=1}^{s+t} x_{i}=0\}$.
Consider the projection $p:L\to \R^{s}$ given by the first $s$ coordinate functions.
Then we have a  subgroup $U$ with the rank $s$ of ${\mathcal O}_{K}^{\ast\, +}$ such that $p(l(U))$ is a lattice in $\R^{s}$.
Write $l(U)=\Z v_{1}\oplus\dots\oplus \Z v_{s}$ for generators $v_{1},\dots v_{s}$ of $l(U)$.
For the standerd basis $e_{1},\dots ,e_{s+t}$ of $\R^{s+t}$, we have
\[\sum_{j=1}^{s} a_{ij}v_{j}=e_{i}+\sum_{k=1}^{t}b_{ik}e_{s+k}
\]
for any $1\le i\le s$.
Consider the complex half plane $H=\{z\in \C: {\rm Im} z>0\}=\R\times \R_{>0}$.
We have the action of $U\ltimes{\mathcal O}_{K}$ on $H^{s}\times \C^{t}$
such that 
\begin{multline*}
(a,b)\cdot (x_{1}+\sqrt{-1}y_{1},\dots ,x_{s}+\sqrt{-1}y_{s}, z_{1},\dots ,z_{t})\\
=(\sigma_{1}(a)x_{1}+\sigma_{1}(b)+\sqrt{-1} \sigma_{1}(a)y_{1}, \dots ,\sigma_{s}(a)x_{s}+\sigma_{s}(b)+\sqrt{-1} \sigma_{s}(a)y_{s},\\
 \sigma_{s+1}(a)z_{1}+\sigma_{s+1}(b),\dots ,\sigma_{s+t}(a)z_{t}+\sigma_{s+t}(b)).
\end{multline*}
In \cite{OT} it is proved that the quotient $H^{s}\times \C^{t}/U\ltimes{\mathcal O}_{K}$ is compact.
We call this complex manifold a  Oeljeklaus-Toma (OT) manifold of type $(s,t)$.

For $a \in U$ and $(t_{1},\dots ,t_{s})=p(l(a))\in p(l(U))$, since $l(U)$ is generated by the basis $v_{1},\dots,v_{s}$ as above, $l(a)$ is a linear combination of $e_{1}+\sum_{k=1}^{t}b_{1k}e_{s+k},\dots ,e_{s}+\sum_{k=1}^{t}b_{sk}e_{s+k}$ and hence we have 
\[l(a)=\sum_{i=1}^{s}t_{i}(e_{i}+\sum_{k=1}^{t}b_{ik}e_{s+k})=(t_{1},\dots ,t_{s}, \sum_{i=1}^{s}b_{i1}t_{i},\dots,\sum_{i=1}^{s}b_{it}t_{i}).
\]
By $2\log \vert \sigma_{s+k}(a)\vert=\sum_{i=1}^{s}b_{ik}t_{i}$, we can write 
\[\sigma_{s+k}(a)=e^{\frac{1}{2}\sum_{i=1}^{s}b_{ik}t_{i}+\sqrt{-1}\sum_{i=1}^{s}c_{ik}t_{i}}\]
for some $c_{ik}\in \R$.
We consider the Lie group $G=\R^{s}\ltimes_{\phi} (\R^{s}\times \C^{t})$ with
\[
\phi(t_{1},\dots ,t_{s})\\
={\rm diag}(e^{t_{1}},\dots ,e^{t_{s}},e^{\psi_{1}+\sqrt{-1}\varphi_{1}},\dots ,e^{\psi_{t}+\sqrt{-1}\varphi_{t}})\]
 where $\psi_{k}=\frac{1}{2}\sum_{i=1}^{s}b_{ik}t_{i}$ and $\varphi_{k}=\sum_{i=1}^{s}c_{ik}t_{i}$.
Then for $(t_{1},\dots ,t_{s})\in p(l(U))$, we have 
\[\phi(t_{1},\dots ,t_{s})(\sigma ({\mathcal O}_{K}))\subset \sigma({\mathcal O}_{K}).\]
By the embedding 
\[U\ltimes{\mathcal O}_{K} \ni (a,b)\mapsto (p(l(a)), \sigma(b))\in G,\]
the group $U\ltimes{\mathcal O}_{K} $ is a subgroup of $G$.
Since $p(l(U))$ and $\sigma({\mathcal O}_{K})$ are lattices in $\R^{s}$ and $\R^{s}\times \C^{t}$ respectively, the subgroup $U\ltimes{\mathcal O}_{K} $ is a lattice in $G$.
By the correspondence 
\begin{multline*}
H^{s}\times \C^{t}\ni (x_{1}+\sqrt{-1}y_{1},\dots ,x_{s}+\sqrt{-1}y_{s},z_{1},\dots,z_{t})\\
\mapsto (x_{1},\log y_{1},\dots ,x_{s}, \log y_{s},z_{1},\dots,z_{t})\in \R^{s}\times \R^{s}\times \C^{t},
\end{multline*}
we can identify the action of $U\ltimes{\mathcal O}_{K} $ on $H^{s}\times \C^{t}$  with the left action of the lattice $U\ltimes{\mathcal O}_{K} $ on $G$.
Hence OT-manifold $H^{s}\times \C^{t}/U\ltimes{\mathcal O}_{K}$ is considered as a solvmanifold $G/U\ltimes{\mathcal O}_{K}$.
Let $\g$ be the Lie algebra of $G$.
Then $\bigwedge \g^{\ast}$ is generated by $\{\alpha_{1}, \dots ,\alpha_{s}, \beta_{1}, \dots, \beta_{s}, \gamma_{1},\gamma_{2},\dots ,\gamma_{2t-1}, \gamma_{2t}\}$ such that the differential is given by
\[d\alpha_{i}=0,\  d\beta=-\alpha_{i} \wedge \beta_{i},
\]
\[d\gamma_{2i-1}=\bar\psi_{i}\wedge  \gamma_{2i-1}+\bar\varphi_{i} \wedge \gamma_{2i},\,  d\gamma_{2i}=-\bar\varphi _{i}\wedge\gamma_{2i-1}+  \bar\psi_{i}\wedge \gamma_{2i},
\]
where $\bar\psi_{i}=\frac{1}{2}\sum_{i=1}^{s}b_{ik}\alpha_{i}$ and $\bar\varphi_{i} =\sum_{i=1}^{s}c_{ik}\alpha_{i}$.
Consider $w_{i}=\alpha_{i}+\sqrt{-1}\beta_{i}$ for $1\le i\le s$ and  $w_{s+i}=\gamma_{2i-1}+\sqrt{-1} \gamma_{2i}$ as $(1,0)$-forms.
Then $w_{1},\dots ,w_{s+t}$ gives a left-invariant almost complex structure $J$.
By the computations of $dw_{i}$, $J$ is integrable.
The complex manifold $(G/U\ltimes{\mathcal O}_{K},J)$ is a presentation of OT-manifold $H^{s}\times \C^{t}/U\ltimes{\mathcal O}_{K}$ as a solvmanifold.
\begin{remark}
For $s=1, t=1$, we have $G=\R\ltimes_{\phi} (\R\times \C)$ with $\phi(t)={\rm diag}(e^{t},e^{-\frac{1}{2}t+\sqrt{-1}ct})$.
It is known that this $G$ admits a lattice $\Gamma$ and $G/\Gamma$ is the  Inoue surface $S^{0}$ (see \cite{HK}).
\end{remark}
\begin{remark}
We can not take $\varphi_{k}=0$.
In fact, the completely solvable Lie group $\R\ltimes_{\phi^{\prime}} \R^{3}$ with $\phi^{\prime}(t)={\rm diag}(e^{t},e^{-\frac{1}{2}t},e^{-\frac{1}{2}t})$ admits no lattice (see \cite{HK}).
\end{remark}
\begin{remark}
$J$ is left-invariant but not right-invariant and so $(G,J)$ is not a complex Lie group.
In fact,  $J$ is not fixed by the right action of any non-trivial element of $G$.
Hence the right action of $G$ on $(G/U\ltimes{\mathcal O}_{K},J)$ is not holomorphic.
In \cite{OT}, it is proved that the group of holomorphic automorphisms of each OT-manifold is discrete.
\end{remark}

 We have:
\begin{corollary}
For any lattice $\Gamma$ of $G$,  $(G/\Gamma, J)$ admits no Vaisman metric.
\end{corollary}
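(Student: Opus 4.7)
The plan is to verify the hypotheses of Corollary \ref{WCS} for the solvmanifold presentation $G=\R^{s}\ltimes_{\phi}(\R^{s}\times \C^{t})$ with the left-invariant complex structure $J$ constructed above; once that is done, non-existence of a Vaisman metric on $(G/\Gamma,J)$ for every lattice $\Gamma$ follows immediately.

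First I will check the structural conditions. The action $\phi(t_{1},\dots,t_{s})={\rm diag}(e^{t_{1}},\dots,e^{t_{s}},e^{\psi_{1}+\sqrt{-1}\varphi_{1}},\dots,e^{\psi_{t}+\sqrt{-1}\varphi_{t}})$ is $\C$-diagonalizable, hence semi-simple. All of its eigencharacters $e^{t_{i}}$ and $e^{\psi_{k}\pm\sqrt{-1}\varphi_{k}}$ on $\R^{s}\times\C^{t}$ are non-trivial, so the representation admits no trivial submodule, giving $[G,G]=\R^{s}\times\C^{t}$ of dimension $s+2t$. Since $\dim G=2s+2t$, the hypothesis $t>0$ yields $\dim[G,G]=s+2t>s+t=\dim G/2$.

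Next I will verify that $G$ is weakly completely solvable. The eigencharacters of ${\rm Ad}_{s}$ on $\g_{\C}$ coincide with the ones above (extended trivially on the normal part). A non-trivial such eigencharacter is unitary only if the real part of its exponent vanishes identically; however $t_{i}$ is a non-trivial coordinate function on the acting $\R^{s}$, and $\psi_{k}=\frac{1}{2}\sum_{i}b_{ik}t_{i}$ is not identically zero, since the admissibility of the choice of $U\subset{\mathcal O}_{K}^{\ast\,+}$ in the OT construction keeps the rank-$s$ lattice $l(U)\subset L$ off every coordinate hyperplane $\{x_{s+k}=0\}$. Hence no non-trivial eigencharacter is unitary, and $G$ is weakly completely solvable.

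With these checks in hand, and with $J$ furnished by the $(1,0)$-forms $w_{1},\dots,w_{s+t}$, Corollary \ref{WCS} applies to every lattice $\Gamma$ of $G$ and rules out Vaisman metrics on $(G/\Gamma,J)$. The main obstacle is the non-degeneracy statement $\psi_{k}\not\equiv 0$: while this follows morally from the number-theoretic input (Dirichlet's unit theorem and the admissibility of $U$), it is not purely formal and requires unpacking how $b_{ik}$ is extracted from the basis $v_{1},\dots,v_{s}$ of $l(U)$. If one wishes to sidestep this, Theorem \ref{MTTT} can instead be applied to the distinguished lattice $\Gamma=U\ltimes{\mathcal O}_{K}$, using $b_{1}(\g)=s$ (the closed left-invariant $1$-forms are spanned by $\alpha_{1},\dots,\alpha_{s}$) together with the known value $b_{1}(G/\Gamma)=s$ from \cite{OT}.
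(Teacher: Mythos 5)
Your proposal is correct and follows essentially the same route as the paper: the paper's proof also just computes $\dim [G,G]=s+2t>s+t=\frac{\dim G}{2}$ and invokes Corollary \ref{WCS}, asserting that $G$ is weakly completely solvable. The point you flag as the main obstacle (that $\psi_{k}\not\equiv 0$, so that no eigencharacter $e^{\psi_{k}\pm\sqrt{-1}\varphi_{k}}$ is a non-trivial unitary character) is in fact left unjustified in the paper as well, so your explicit attention to it --- and your fallback via Theorem \ref{MTTT} for the distinguished lattice $U\ltimes{\mathcal O}_{K}$, which is the case needed for the OT application --- is a reasonable supplement rather than a deviation.
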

\begin{proof}
Since we have $H^{\ast}(\g)=\langle [\alpha_{1}],\dots ,[\alpha_{s}]\rangle$, we have
\[\dim [G,G]=2t+2s-s=2t+s>t+s=\frac{\dim G}{2}.\]
Since $G$ is weakly completely solvable, the corollary follows from Corollary \ref{WCS}.
\end{proof}
Hence we have:
\begin{corollary}\label{vaio}
 OT-manifolds do not   admit  Vaisman metrics.
\end{corollary}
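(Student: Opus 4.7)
The plan is to read off the statement as a direct specialization of the corollary immediately preceding it. In the body of this section, each OT-manifold of type $(s,t)$ has been identified, via the correspondence $(x_i+\sqrt{-1}y_i,z_k)\mapsto (x_i,\log y_i,z_k)$, with the solvmanifold $G/U\ltimes\mathcal{O}_K$, where $G=\R^s\ltimes_\phi(\R^s\times\C^t)$ carries the left-invariant integrable complex structure $J$ defined by the $(1,0)$-forms $w_i$. So once the OT-manifold has been rewritten in this form, the problem reduces to invoking the corollary that for any lattice $\Gamma\subset G$ the complex solvmanifold $(G/\Gamma,J)$ admits no Vaisman metric.

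To do so I would check that $\Gamma=U\ltimes\mathcal{O}_K$ really is a lattice in the constructed $G$. The section establishes this by noting that $p(l(U))\subset\R^s$ is a lattice (Dirichlet's unit theorem plus the choice of $U$ of rank $s$) and that $\sigma(\mathcal{O}_K)\subset\R^s\times\C^t$ is a lattice, together with the semiconjugacy $\phi(t_1,\dots,t_s)\sigma(\mathcal{O}_K)\subset\sigma(\mathcal{O}_K)$ for $(t_1,\dots,t_s)\in p(l(U))$. With this verified, the hypotheses of the preceding corollary are precisely in place: $\phi$ is semi-simple by construction, $G$ is weakly completely solvable because the eigenvalues of $\mathrm{Ad}_{sg}$ on $\g_\C$ read off $\phi$ have moduli $e^{\pm t_i}$ and $e^{\pm\psi_k}$ and therefore none of them is a non-trivial unitary character, and the dimension count $\dim[G,G]=s+2t>s+t=\tfrac12\dim G$ follows from the computation $H^1(\g)=\langle[\alpha_1],\dots,[\alpha_s]\rangle$ performed just before.

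Applying the preceding corollary to $\Gamma=U\ltimes\mathcal{O}_K$ then yields the non-existence of a Vaisman metric on $(G/\Gamma,J)$, i.e.\ on the OT-manifold of type $(s,t)$. Since the argument is a specialization, there is no genuine obstacle; the only subtlety to mention is that the conclusion covers all types $(s,t)$ with $s,t>0$ simultaneously, recovering in particular the previously known even-$s$ cases via a uniform route and settling the odd-$s$ case left open before this paper.
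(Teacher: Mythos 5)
Your proposal is correct and follows essentially the same route as the paper: the paper also deduces this corollary by presenting each OT-manifold as the solvmanifold $(G/U\ltimes{\mathcal O}_{K},J)$ and applying the immediately preceding corollary (itself an instance of Corollary \ref{WCS}), with the hypotheses checked exactly as you do via $\dim [G,G]=s+2t>s+t=\tfrac{1}{2}\dim G$ and the weak complete solvability of $G$.
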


We consider the case $t=1$.
We can take $U={\mathcal O}_{K}^{\ast\, +}$ and any $U$ is a finite index subgroup of ${\mathcal O}_{K}^{\ast\, +}$.
For $U={\mathcal O}_{K}^{\ast\, +}$,  we have  
\[(\sigma_{1}(a), \dots \sigma_{s}(a), \sigma_{s+1}(a))=(e^{t_{1}}, \dots, e^{t_{s}}, e^{-\frac{1}{2}(t_{1}+\dots +t_{s})}e^{\sqrt{-1}\varphi_{1}}),
\]
for  $l(a)=(t_{1}, \dots, t_{s},-t_{1}-\dots -t_{s} )\in l({\mathcal O}_{K}^{\ast\, +}) \subset L$.
Hence we have  $G=\R^{s}\ltimes _{\phi}(\R^{s}\times \C) $ such that 
\[\phi(t_{1},\dots ,t_{s})={\rm diag}(e^{t_{1}},\dots ,e^{t_{s}},e^{-\frac{1}{2}(t_{1}+\dots+ t_{s})+\sqrt{-1}\varphi_{1}}).
\]
Then $\bigwedge \g^{\ast}$ is generated by $\{\alpha_{1}, \dots ,\alpha_{s}, \beta_{1}, \dots , \beta_{s}, \gamma_{1},\gamma_{2}\}$ such that the differential is given by
\[d\alpha_{i}=0,\  d\beta=-\alpha_{i} \wedge \beta_{i},
\]
\[d\gamma_{1}=\frac{1}{2}\theta\wedge \gamma_{1}+\bar\varphi_{1} \wedge \gamma_{2},\,  d\gamma_{2}=-\bar\varphi_{1} \wedge\gamma_{1}+  \frac{1}{2}\theta\wedge \gamma_{2},
\]
where $\theta=\alpha_{1}+\dots \alpha_{s}$.
Consider $w_{i}=\alpha_{i}+\sqrt{-1}\beta_{i}$ for $1\le i\le s$ and  $w_{s+1}=\gamma_{1}+\sqrt{-1} \gamma_{2}$ as $(1,0)$-forms.
Then $w_{1},\dots ,w_{s+1}$ gives a left-invariant  complex structure $J$.
Consider 
\[\omega=\sum^{n}_{i}2\alpha_{i}\wedge\beta_{i}+\sum_{i\not=j}\alpha_{i}\wedge \beta_{j}+\gamma_{1}\wedge \gamma_{2}.\]
Since we have 
\begin{multline*}
\sum_{\substack{i,j,k \\ {i\not=j}}}\alpha_{k}\wedge\alpha_{i}\wedge\beta_{j}
=\sum_{i\not=j}\alpha_{j}\wedge\alpha_{i}\wedge\beta_{j}+\sum_{\substack{i,j,k \\
k\not=i,k\not=j, i\not=j}}\alpha_{k}\wedge\alpha_{i}\wedge\beta_{j}\\
=
\sum_{i\not=j}\alpha_{j}\wedge\alpha_{i}\wedge\beta_{j}+\sum_{k<i}\alpha_{k}\wedge\alpha_{i}\wedge\beta_{j}
-\sum_{i<k}\alpha_{i}\wedge\alpha_{k}\wedge\beta_{j}\\
=\sum_{i\not=j}\alpha_{j}\wedge\alpha_{i}\wedge\beta_{j},
\end{multline*}
we have 
\begin{multline*}
\theta\wedge\omega
=\sum_{i,k} 2\alpha_{k}\wedge\alpha_{i}\wedge\beta_{i}+
\sum_{\substack{i,j,k \\ {i\not=j}}}\alpha_{k}\wedge\alpha_{i}\wedge\beta_{j}+\theta\wedge\gamma_{1}\wedge\gamma_{2}\\
=\sum_{i,j}2\alpha_{i}\wedge\alpha_{j}\wedge\beta_{j}+
\sum_{i\not=j}\alpha_{j}\wedge\alpha_{i}\wedge\beta_{j}+\theta\wedge\gamma_{1}\wedge\gamma_{2}\\
=\sum_{i\not=j}\alpha_{i}\wedge\alpha_{j}\wedge \beta_{j}+\theta\wedge\gamma_{1}\wedge\gamma_{2},
\end{multline*}
and so we have $d\omega=\theta\wedge \omega$.
Thus for $g=\omega(-,J-)$ $(g,J)$ is a left-invariant LCK metric on $G$.
\begin{remark}
In \cite{OT}, Oeljeklaus and Toma gave a K\"ahler potential on  $H^{s}\times \C$ which gives a LCK structure on $H^{s}\times \C/{\mathcal O}_{K}^{\ast\, +}\ltimes{\mathcal O}_{K}$.
As above, in this paper, we have represented such LCK metric as a left-invariant form on $G$.
\end{remark}

\begin{remark}
Studying the action of ${\mathcal O}_{K}^{\ast\, +}\ltimes{\mathcal O}_{K}$ on the K\"ahler potential constructed in \cite{OT}, in \cite{PV} Parton and Vuletescu compute the rank  of this LCK metric on each  OT-manifold.
Here the rank is an invariant of a conformal class of LCK metric (see \cite{PV} for definition).
By this computation, they showed that the rank of this LCK metric is equal to $s$ or $\frac{s}{2}$.
This result shows how much  this LCK metric is  different from a Vaisman metric, because the rank of Vaisman metric is equal to $1$ (see \cite{GOP}).
\end{remark}
In this paper, we proved the non-existence of Vaisman metrics on OT-manifolds.
By Corollary \ref{vaio}, we have:
\begin{corollary}
For any $s>0$,  OT-manifolds of type $(s,1)$ are LCK manifolds not admitting Vaisman metrics.
\end{corollary}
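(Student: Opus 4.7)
The statement sits at the confluence of two facts that have already been fully established in the excerpt, so the proof plan is essentially a short two-line assembly rather than new content.

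First, I would invoke the explicit computation carried out just above the statement: on the group $G=\R^s\ltimes_\phi(\R^s\times\C)$ representing the type-$(s,1)$ OT-manifold, the left-invariant 2-form
\[\omega=\sum_{i}2\alpha_{i}\wedge\beta_{i}+\sum_{i\neq j}\alpha_{i}\wedge\beta_{j}+\gamma_{1}\wedge\gamma_{2}\]
satisfies $d\omega=\theta\wedge\omega$ with $\theta=\alpha_1+\dots+\alpha_s$. Combined with the left-invariant complex structure $J$ defined by the $(1,0)$-forms $w_i=\alpha_i+\sqrt{-1}\beta_i$ and $w_{s+1}=\gamma_1+\sqrt{-1}\gamma_2$, this gives a left-invariant LCK metric $g=\omega(-,J-)$ on $G/\Gamma$. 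Descending to the OT-manifold via the identification $H^s\times\C/U\ltimes\mathcal{O}_K=G/U\ltimes\mathcal{O}_K$, we conclude that every type-$(s,1)$ OT-manifold is LCK; equivalently, one may simply cite Oeljeklaus--Toma's original construction.

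Second, I would apply Corollary \ref{vaio}, which asserts that no OT-manifold (of any type $(s,t)$) admits a Vaisman metric. In particular, the type-$(s,1)$ OT-manifolds do not admit Vaisman metrics. Combining the two bullet points, type-$(s,1)$ OT-manifolds are LCK manifolds that carry no Vaisman metric, which is exactly the assertion.

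There is no genuine obstacle at this stage; the real work was done in Theorems \ref{LCS} and \ref{MTTT}, in the weakly-completely-solvable cohomology isomorphism of Section 5 (via Theorem \ref{isoc}), and in the explicit presentation of OT-manifolds as solvmanifolds $G/U\ltimes\mathcal{O}_K$ with $\dim[G,G]=s+2t>\tfrac{\dim G}{2}$. Once these are in place, the final corollary is purely a packaging statement and requires no additional argument.
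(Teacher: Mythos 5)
Your proposal is correct and follows the paper's own route exactly: the LCK property comes from the explicit left-invariant form $\omega$ with $d\omega=\theta\wedge\omega$ constructed just above (equivalently, Oeljeklaus--Toma's original metric), and the non-existence of Vaisman metrics is precisely Corollary \ref{vaio}. Nothing further is needed.
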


To give another important remark concerning a Vaisman metric, we consider $s=2$.
\begin{proposition}{\rm (\cite{OT})}
For an OT-manifold of type $(2,1)$, we have $b_{1}=b_{5}=2$, $b_{2}=b_{4}=1$ and $b_{3}=0$.
\end{proposition}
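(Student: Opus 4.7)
The plan is to compute $H^{*}(G/\Gamma,\C)$ directly from the solvmanifold presentation of section 6 using Theorem \ref{isoc}. Specializing to $s=2$, $t=1$ and passing to $\bigwedge\g^{*}_{\C}$, introduce $\tau:=\gamma_{1}+\sqrt{-1}\gamma_{2}$, $\bar\tau:=\gamma_{1}-\sqrt{-1}\gamma_{2}$ and $\xi_{\pm}:=\tfrac{1}{2}\theta\mp\sqrt{-1}\bar\varphi_{1}$, so that $\bigwedge\g^{*}_{\C}$ is the exterior algebra on $\alpha_{1},\alpha_{2},\beta_{1},\beta_{2},\tau,\bar\tau$ with $d\alpha_{i}=0$, $d\beta_{i}=-\alpha_{i}\wedge\beta_{i}$, $d\tau=\xi_{+}\wedge\tau$ and $d\bar\tau=\xi_{-}\wedge\bar\tau$. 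These six generators are simultaneous eigenvectors for the dual semisimple adjoint action.

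The algebraic torus $\mathbf{T}\subset {\rm Aut}(\g_{\C})$ is then $2$-dimensional, with character lattice generated by the $\g_{\C}$-eigencharacters $e^{s_{1}}$, $e^{s_{2}}$, $e^{-\frac{1}{2}(s_{1}+s_{2})\pm\sqrt{-1}\varphi_{1}(s)}$ subject to the single unit-norm relation that their product equals $1$. I would next determine $A_{\Gamma}$: a general $\mathbf{T}$-character evaluated at $(p(l(a)),\sigma(b))\in \Gamma$ becomes a product of powers of $\sigma_{1}(a),\sigma_{2}(a),\sigma_{3}(a),\overline{\sigma_{3}(a)}$ for $a\in U$, and Dirichlet's units theorem (applied to $l(U)$, which is a rank-$2$ lattice in $L$) combined with the generic $\Z$-independence of the values of $\varphi_{1}$ on $l(U)$ modulo $2\pi$ forces all four exponents to coincide. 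Together with the unit-norm relation, this shows $A_{\Gamma}=\{1\}$, so that the subDGA in Theorem \ref{isoc} collapses to the $\mathbf{T}$-invariant subalgebra $\bigl(\bigwedge\g^{*}_{\C}\bigr)^{\mathbf{T}}$.

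A weight count then shows that $\bigl(\bigwedge\g^{*}_{\C}\bigr)^{\mathbf{T}}$ is spanned by the eight monomials $\alpha^{I}$ and $\alpha^{I}\cdot\beta_{1}\beta_{2}\tau\bar\tau$ for $I\subseteq\{1,2\}$, with multiplicities $(1,2,1,0,1,2,1)$ in degrees $0,\dots,6$. All eight are closed: the monomials $\alpha^{I}$ trivially, and $d(\beta_{1}\beta_{2}\tau\bar\tau)=0$ since the contribution $-\theta\wedge\beta_{1}\beta_{2}\tau\bar\tau$ from $d(\beta_{1}\beta_{2})$ cancels against the contribution $\theta\wedge\beta_{1}\beta_{2}\tau\bar\tau$ from $d(\tau\bar\tau)$, using $\xi_{+}+\xi_{-}=\theta$. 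Since $d$ vanishes on this $8$-dimensional invariant subcomplex, Theorem \ref{isoc} identifies $H^{*}(G/\Gamma,\C)$ with it and yields the claimed Betti numbers. The main obstacle is the number-theoretic step of verifying $A_{\Gamma}=\{1\}$: beyond the logarithmic relations furnished by Dirichlet, one must exclude non-trivial unitary characters coming from the argument of $\sigma_{3}$ by controlling $\varphi_{1}|_{l(U)}$ modulo $2\pi\Z$.
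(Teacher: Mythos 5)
The paper offers no proof of this proposition; it simply quotes the Betti numbers from \cite{OT}, where they are obtained by Oeljeklaus and Toma's own methods. Your route through the solvmanifold presentation and Theorem \ref{isoc} is therefore genuinely different, and most of it is sound: the eigenbasis $\alpha_{i},\beta_{i},\tau,\bar\tau$ with $d\tau=\xi_{+}\wedge\tau$, $d\bar\tau=\xi_{-}\wedge\bar\tau$ and $\xi_{+}+\xi_{-}=\theta$ is correct, the ${\bf T}$-invariant subalgebra is exactly the span of the eight monomials $\alpha^{I}$ and $\alpha^{I}\wedge\beta_{1}\wedge\beta_{2}\wedge\tau\wedge\bar\tau$, the differential vanishes on it, and the resulting dimensions $(1,2,1,0,1,2,1)$ are the asserted Betti numbers. (One small slip: the relation lattice of the four eigencharacters is $\Z(1,1,1,1)$, so ${\bf T}$ is a three-dimensional subtorus of $(\C^{\ast})^{4}$, not two-dimensional; this does not affect the invariant computation, which only uses the relation lattice.)

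The genuine gap is the one you flag yourself: the proof that $A_{\Gamma}=\{1\}$. An appeal to ``generic $\Z$-independence of $\varphi_{1}$ modulo $2\pi$'' cannot work, because the proposition must hold for \emph{every} OT-manifold of type $(2,1)$, not for a generic choice of the field $K$ and the unit group $U$. The gap can, however, be closed. Write a candidate character as $\lambda_{1}^{a}\lambda_{2}^{b}\lambda_{3}^{c}\lambda_{4}^{d}$ in the eigencharacters. Triviality of its modulus on $p(l(U))$ says that the functional $ax_{1}+bx_{2}+\frac{c+d}{2}x_{3}$ vanishes on $l(U)$, which spans $L$ by Dirichlet's theorem, so $a=b=\frac{c+d}{2}$; the remaining condition is $(\sigma_{3}(u)/\vert\sigma_{3}(u)\vert)^{m}=1$ for all $u\in U$, where $m=c-d$. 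If $m\neq 0$ this means $\sigma_{3}(u^{m})\in\R$ for every $u\in U$, so the rank-two torsion-free group $U^{m}$ lies in the proper subfield $F=\{x\in K:\sigma_{3}(x)\in\R\}$, hence in $\mathcal{O}_{F}^{\ast}$. But $[F:\Q]\le 2$, so $\mathcal{O}_{F}^{\ast}$ has rank at most $1$ by Dirichlet's theorem again --- a contradiction. Hence $m=0$, $(a,b,c,d)\in\Z(1,1,1,1)$, and $A_{\Gamma}=\{1\}$. With this step inserted, your argument becomes a complete and self-contained proof.
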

In this case the LCK metric is given by
\[g=2\alpha_{1}^{2}+2\beta^{2}_{1}+2\alpha_{1}\cdot \alpha_{2}-2\beta_{1} \cdot \beta_{2}+2\alpha_{2}^{2}+ 2\beta_{2}^{2}+\gamma_{1}^2+\gamma_{2}^{2}.
\]
We call a Riemaniann metric formal if all products of harmonic forms are again harmonic (see \cite{Kot}).
\begin{proposition}
$g$ is a formal metric on an OT-manifold of type $(2,1)$.
\end{proposition}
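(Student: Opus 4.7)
The plan is to exhibit explicit $g$-harmonic representatives in every degree on $G/\Gamma$ and verify that their wedge products remain harmonic. Diagonalising $g$ on $\mathrm{span}(\alpha_1,\alpha_2)$ (Gram matrix $\left(\begin{smallmatrix}2&1\\1&2\end{smallmatrix}\right)$) and on $\mathrm{span}(\beta_1,\beta_2)$ (Gram matrix $\left(\begin{smallmatrix}2&-1\\-1&2\end{smallmatrix}\right)$) produces an orthonormal coframe $(u_1,u_2,v_1,v_2,\gamma_1,\gamma_2)$ with $u_1\propto\alpha_1+\alpha_2$, $u_2\propto\alpha_1-\alpha_2$, $v_1\propto\beta_1+\beta_2$, $v_2\propto\beta_1-\beta_2$. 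Fixing the orientation so that $u_1\wedge u_2\wedge v_1\wedge v_2\wedge\gamma_1\wedge\gamma_2$ is the positive volume form makes the Hodge star $\star$ explicit on left-invariant forms.

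Using the Betti numbers $(b_0,\dots,b_6)=(1,2,1,0,1,2,1)$, I propose the following left-invariant harmonic representatives: the constants in degree $0$; $\alpha_1,\alpha_2$ in degree $1$; $\alpha_1\wedge\alpha_2$ in degree $2$; $\eta_4:=\beta_1\wedge\beta_2\wedge\gamma_1\wedge\gamma_2$ in degree $4$; and Hodge duals in degrees $5$ and $6$. Closedness of $\eta_4$ is a short computation from the structure equations $d\beta_i=-\alpha_i\wedge\beta_i$, $d\gamma_1=\tfrac12\theta\wedge\gamma_1+\bar\varphi_1\wedge\gamma_2$, $d\gamma_2=-\bar\varphi_1\wedge\gamma_1+\tfrac12\theta\wedge\gamma_2$: the $\tfrac12\theta$ contributions combine and cancel the contributions of $d\beta_1,d\beta_2$, while the $\bar\varphi_1$ terms die by $\gamma_j\wedge\gamma_j=0$. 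A direct Hodge-star computation in the orthonormal coframe gives $\star(\alpha_1\wedge\alpha_2)=\pm\eta_4$, so coclosedness of $\alpha_1\wedge\alpha_2$ is equivalent to closedness of $\eta_4$; similarly $\star\alpha_i$ turns out to be a linear combination of the $\alpha_j\wedge\eta_4$, whose closedness reduces to $d\alpha_j=0=d\eta_4$. Since invariant harmonic forms inject into cohomology and the dimensions here match the Betti numbers, these exhaust all harmonic forms on $G/\Gamma$.

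Finally, verify multiplicative closure. The products $\alpha_i\wedge(\alpha_1\wedge\alpha_2)$ and $(\alpha_1\wedge\alpha_2)^2$ vanish on the nose for combinatorial reasons, consistent with $H^3=0$. The nontrivial products are $\alpha_i\wedge\eta_4$ and $(\alpha_1\wedge\alpha_2)\wedge\eta_4$: the latter is a nonzero scalar multiple of the volume form, and for the former, the Hodge-star formulas above express $\star\alpha_1,\star\alpha_2$ as an invertible $\R$-linear combination of $\alpha_1\wedge\eta_4,\alpha_2\wedge\eta_4$, so each $\alpha_i\wedge\eta_4$ is itself a linear combination of the harmonic $5$-forms $\star\alpha_j$. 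The main obstacle is the bookkeeping required for the Hodge-star identities $\star(\alpha_1\wedge\alpha_2)=\pm\eta_4$ and the expression of $\star\alpha_i$ in terms of the $\alpha_j\wedge\eta_4$; both are finite linear-algebra exercises in the orthonormal coframe, with no conceptual surprises.
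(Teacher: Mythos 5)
Your proof is correct and follows essentially the same route as the paper: exhibit left-invariant harmonic representatives in each degree, use the Betti numbers $(1,2,1,0,1,2,1)$ to conclude these exhaust all harmonic forms, and check that products stay in this span. The only difference is that you carry out the diagonalization and Hodge-star verification that the paper leaves implicit; your degree-$5$ representatives $\star\alpha_i$ span the same space as the paper's $\alpha_i\wedge\beta_1\wedge\beta_2\wedge\gamma_1\wedge\gamma_2$.
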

\begin{proof}
For the metric $g$, the left invariant forms $\alpha_{1}$, $\alpha_{2}$, $\alpha_{1}\wedge\alpha_{2}$, $\beta_{1}\wedge \beta_{2}\wedge \gamma_{1}\wedge \gamma_{2}$, $\alpha_{1}\wedge \beta_{1}\wedge \beta_{2}\wedge \gamma_{1}\wedge \gamma_{2}$, $\alpha_{2}\wedge \beta_{1}\wedge \beta_{2}\wedge \gamma_{1}\wedge \gamma_{2}$ and $\alpha_{1}\wedge\alpha_{2}\wedge\beta_{1}\wedge \beta_{2}\wedge \gamma_{1}\wedge \gamma_{2}$ are harmonic forms.
By the Betti numbers of an OT-manifold of type $(2,1)$, the space of the all harmonic forms is spanned by these forms.
Hence all products of harmonic forms are again harmonic.
\end{proof}
\begin{remark}
In \cite{OP}, it is proved that a Vaisman metric on compact manifold $M$ is a formal metric if and only if $b_{1}(M)=b_{2n+1}(M)=1$ and $b_{k}(M)=0$ for $2\le k\le 2n$. 
On the other hand, for a general LCK metric on compact manifold $M$, Ornea and Pilca's theorem does not hold.
\end{remark}
\begin{remark}
The following problems remain.
\end{remark}
\begin{problem}
In $G$, does there exist a lattice which can not be constructed by Oeljeklaus and Toma's technique?
\end{problem}
\begin{problem}
For odd $s>0$ does an OT-manifold of type $(s,1)$ admit  a non-invariant complex structure admitting a compatible Vaisman metric?
\end{problem}
{\bf  Acknowledgements.} 

The author would like to express his gratitude to   Toshitake Kohno for helpful suggestions and stimulating discussions.
He would also like to thank  Keizo Hasegawa  for valuable comments.
This research is supported by JSPS Research Fellowships for Young Scientists.

\end{document}